\newcommand\RE{\mathbb{R}}
\newcommand\bfu{\mathbf{u}}
\newcommand\bfv{\mathbf{v}}
\newcommand\V{\boldsymbol{V}}
\newcommand\Q{\boldsymbol{Q}}
\newcommand\Qo{\boldsymbol{Q}^0}
\newcommand\W{W}
\newcommand\bfn{\mathbf{n}}
\newcommand\bft{\mathbf{t}}
\newcommand\bff{\mathbf{f}}
\newcommand\bfp{\mathbf{p}}
\newcommand\bfq{\mathbf{q}}
\newcommand\bfg{\mathbf{g}}
\newcommand\bfx{\mathbf{x}}
\newcommand\bfH{\boldsymbol{H}}
\newcommand\bfL{\boldsymbol{L}}
\newcommand\bfzero{\mathbf{0}}
\newcommand\sfA{\mathsf{A}}
\newcommand\sfB{\mathsf{B}}
\newcommand\sfC{\mathsf{C}}
\newcommand\sfD{\mathsf{D}}
\newcommand\sfx{\mathsf{x}}
\newcommand\sfy{\mathsf{y}}
\newcommand\sfzero{\mathsf{0}}
\newcommand\bfsigma{\boldsymbol{\sigma}}
\newcommand\bftau{\boldsymbol{\tau}}
\newcommand\grad{\operatorname{\textnormal{\bf grad}}}
\renewcommand\div{\operatorname{div}}
\newcommand\Hdiv{\boldsymbol{H}(\div;\Omega)}
\newcommand\Hodivom{\boldsymbol{H}_0(\div^0;\Omega;\mu)}
\newcommand\rot{\operatorname{rot}}
\newcommand\Hrot{\boldsymbol{H}(\rot;\Omega)}
\newcommand\Horot{\boldsymbol{H}_0(\rot;\Omega)}
\newcommand\curl{\operatorname{\textnormal{\bf curl}}}
\newcommand\Hcurl{\boldsymbol{H}(\curl)}
\newcommand\Hocurl{\boldsymbol{H}_0(\curl;\Omega)}
\newcommand\Hu{H^1(\Omega)}
\newcommand\Ldo{L^2_0(\Omega)}
\newcommand\Po{\mathcal{P}}
\newcommand\Ned{\mathcal{N}}
\newcommand\T{\mathcal{T}}
\newcommand\eps{\varepsilon}
\newcommand\Span{\mathrm{span}}
\newtheorem{theorem}{Theorem}
\newtheorem{proposition}{Proposition}
\newtheorem{corollary}{Corollary}
\newtheorem{remark}{Remark}
\begin{document}
\title[]{Approximation of the Maxwell eigenvalue problem in a Least-Squares
setting}
\author{Fleurianne Bertrand}
\address{Fakult\"at f\"ur Mathematik, TU Chemnitz, Germany}
\email{fleurianne.bertrand@mathematik.tu-chemnitz.de}
\urladdr{https://www.tu-chemnitz.de/mathematik/numapde/}
\author{Daniele Boffi}
\address{King Abdullah University of Science and Technology (KAUST), Saudi
Arabia and Universit\`a degli Studi di Pavia, Italy}
\email{daniele.boffi@kaust.edu.sa}
\urladdr{https://cemse.kaust.edu.sa/people/person/daniele-boffi}
\author{Lucia Gastaldi}
\address{DICATAM, Universit\`a degli Studi di Brescia, Italy and IMATI-CNR,
Pavia, Italy}
\email{lucia.gastaldi@unibs.it}
\urladdr{http://lucia-gastaldi.unibs.it}
\subjclass{}

\begin{abstract}
We discuss the approximation of the eigensolutions associated with the Maxwell
eigenvalues problem in the framework of least-squares finite elements. We
write the Maxwell curl curl equation as a system of two first order equation
and design a novel least-squares formulation whose minimum is attained at
the solution of the system. The eigensolution are then approximated by
considering the eigenmodes of the underlying solution operator. We study the
convergence of the finite element approximation and we show several numerical
tests confirming the good behavior of the method. It turns out that nodal
elements can be successfully employed for the approximation of our problem
also in presence of singular solutions.
\end{abstract}
\maketitle
\section{Introduction}
\label{se:intro}

The finite element approximation of the eigenmodes associated with the Maxwell
system is a deeply studied and nowadays well understood topic. In
particular, it is universally recognized that the natural choice for the
approximation of the eigensolutions associated with the curl curl operator, is
to consider N\'ed\'elec (edge) finite elements~\cite{nedelec}. Approximations
based on edge elements are optimally convergent, do not present any spurious
modes, and are robust in presence of singularities due to the domain or to the
presence of different materials. The interested reader is referred to the
related literature; in particular to~\cite{bossavit3} for a discussion about
Whitney forms in connection with this problem, to~\cite{bfgp,boffifortid} for
the first analysis of the spectral correctness of edge elements, and
to~\cite{hiptacta,monk,acta} for general surveys on this subject.

Formulations based on least-squares finite elements are widely used for the
approximation of models involving partial differential
equations~\cite{bochev}. Recent studies are investigating the behavior of the
spectrum of operators associated with least-squares finite element
formulations~\cite{bbLS,linda}. These studies have their interest by
themselves, and in some cases they can contribute to the design of new
schemes.

In this paper we begin the study of the eigenvalues associated with the
Maxwell system in the framework of least-squares finite elements. The
interest of this research is twofold: on one side we discuss how to introduce
a first order formulation of the Maxwell system in this framework, on the
other side, we analyze rigorously the approximation of the eigensolutions with
various choices of finite element spaces in two and three dimensions.
Besides formulations based on edge elements, we believe that a remarkable
result of our investigation is that standard Lagrangian (nodal) elements can
be successfully used in two dimensions and, with some care, in three
dimensions. In two dimensions, the use of nodal elements is supported by a
rigorous theory, valid when the solution satisfies appropriate regularity
assumptions. Our numerical investigations show that actually the approximation
based on nodal elements is performing well also in presence of strong
singularities, such as those arising from reentrant corners, cracks, and
material discontinuities.

In three dimensions, the theory covers the case of edge elements, while the
nodal element approximation requires a more specific analysis, probably
depending on the structure of the mesh, which will be the object of future
investigations.

Several numerical experiments complement the theoretical results, confirming
the theory and supplementing the theoretical investigations when they are not
available.

Section~\ref{se:setting} describes the problem we are dealing with and
introduced the first order least-squares formulation. We continue then in
Section~\ref{se:2D} with the discussion of the two dimensional case. Indeed,
the two and three dimensional case, although sharing some analogies, are
intrinsically different: in two dimensions the problem is equivalent to the
Laplace eigenvalue problem with Neumann boundary conditions.
Section~\ref{se:2Dnum} is devoted to the numerical approximation of the two
dimensional problem. Particular care is devoted to the definition of the
solutions of our generalized eigenvalue problem and to the description of
possible degenerate situations. Several two dimensional numerical results are
presented in Section~\ref{se:2Dres}, confirming the good behavior of nodal
element approximations also in presence of strong singularities. The last to
sections deal with the three dimensional case: in Section~\ref{se:3D} the
theory is developed, while in Section~\ref{se:3Dres} some numerical results
are presented.

\section{Problem setting}
\label{se:setting}

Let $\Omega$ be a domain in $\RE^3$. We start by the case of a contractible
domain $\Omega$ where harmonic forms do not enter the characterization of our
solutions.
We are interested in the following eigenvalue problem associated with
Maxwell's equation: find $\lambda\in\RE$ and a non-vanishing $\bfu$ such that
\begin{equation}
\left\{
\aligned
&\curl(\mu^{-1}\curl\bfu)=\lambda\eps\bfu&&\text{in }\Omega\\
&\div(\eps\bfu)=0&&\text{in }\Omega\\
&\bfu\times\bfn=0&&\text{on }\partial\Omega,
\endaligned
\right.
\label{eq:pb3D}
\end{equation}
where $\bfn$ is the outward normal unit vector to the boundary of the domain
$\Omega$, and where $\mu$ and $\eps$ are the (possibly varying) magnetic
permeability and electric permittivity, respectively.

The source problem corresponding to~\eqref{eq:pb3D} reads: given $\bff$ with
$\div(\eps\bff)=0$, find $\bfu$ such that
\begin{equation}
\left\{
\aligned
&\curl(\mu^{-1}\curl\bfu)=\eps\bff&&\text{in }\Omega\\
&\div(\eps\bfu)=0&&\text{in }\Omega\\
&\bfu\times\bfn=0&&\text{on }\partial\Omega.
\endaligned
\right.
\label{eq:pbf3D}
\end{equation}

A representation of~\eqref{eq:pbf3D} as a system of first order equations
could be done, in analogy to the usual procedure for the Laplace equation, as
follows by introducing the auxiliary variable $\bfsigma=\mu^{-1}\curl\bfu$, so
that we have
\[
\left\{
\aligned
&\bfsigma=\mu^{-1}\curl\bfu&&\text{in }\Omega\\
&\curl\bfsigma=\eps\bff&&\text{in }\Omega\\
&\div(\eps\bfu)=0&&\text{in }\Omega\\
&\bfu\times\bfn=0&&\text{on }\partial\Omega.
\endaligned
\right.
\]
Unfortunately, this system is not suitable to be approximated by a
Least-Squares finite element strategy on non-smooth domains when a
minimization principle in $L^2(\Omega)$ is used. Indeed, the functional to be
minimized would read
\[
\mathcal{F}(\bftau,\bfv)=
\|\bftau-\mu^{-1}\curl\bfv\|_0^2+\|\curl\bftau-\eps\bff\|_0^2
+\|\div(\eps\bfv)\|_0^2.
\]
It is apparent that no reasonable choice of functional spaces can be made in
this situation.
Already in the simpler case when $\eps\equiv1$, this would imply that the
variable $\bfv$ should have both divergence and curl
bounded in $L^2(\Omega)$. This is a well known source of troubles for the
finite element approximation when the domain has non convex corners or
edges~\cite{costabel91,fix-stephan82,cox-fix84}, since in that case singular
solutions $\bfu$ are not in $\boldsymbol{H}^1(\Omega)$.

For this reason, we make use of the first order system introduced
in~\cite{bfgp}. Namely we consider a vectorfield $\bfg$ such that
$\curl\bfg=\eps\bff$, $\div(\mu\bfg)=0$, $(\mu\bfg)\cdot\bfn=0$ on
$\partial\Omega$, and look for the pair $(\bfu,\bfp)$
satisfying
\begin{equation}
\left\{
\aligned
&\eps\bfu=\curl\bfp&&\text{in }\Omega\\
&\mu^{-1}\curl\bfu=\bfg&&\text{in }\Omega\\
&\bfu\times\bfn=0&&\text{on }\partial\Omega.
\endaligned
\right.
\label{eq:bfgp3D}
\end{equation}
In general $\bfp$ is not unique, but there is only one $\bfp$ that satisfies
the additional conditions
\begin{equation}
\left\{
\aligned
&\div(\mu\bfp)=0&&\text{in }\Omega\\
&(\mu\bfp)\cdot\bfn=0&&\text{on }\partial\Omega.
\endaligned
\right.
\label{eq:gauge3D}
\end{equation}
We assume $\eps$ and $\mu$ to be real scalar functions satisfying 
\begin{equation}
\label{eq:epsmu}
0<\underline\eps\le\eps\le\overline\eps,\qquad
0<\underline\mu\le\mu\le\overline\mu
\end{equation}
for almost every $\bfx$ in $\Omega$ and introduce the following spaces
\[
\aligned
&\Hcurl=\{\bfv\in\bfL^2(\Omega):\curl\bfv\in\bfL^2(\Omega)\}\\
&\Hocurl=\{\bfv\in\Hcurl:\bfv\times\bfn=0\text{ on }\partial\Omega\}\\
&\Hodivom=\{\bfq\in\bfL^2(\Omega):\div(\mu\bfq)=0\text{ in }\Omega,\
(\mu\bfq)\cdot\bfn=0\text{ on }\partial\Omega\}.
\endaligned
\]
Therefore, $\bfg\in\Hodivom$ and we look for a solution
$\bfp\in\Hodivom$.

\begin{proposition}
Let us consider Problems~\eqref{eq:pbf3D} and~\eqref{eq:bfgp3D} with
\[
\left\{
\aligned
&\curl\bfg=\eps\bff&&\text{in }\Omega\\
&\div(\mu\bfg)=0&&\text{in }\Omega\\
&(\mu\bfg)\cdot\bfn=0&&\text{on }\partial\Omega.
\endaligned
\right.
\]
If $\bfu$ solves~\eqref{eq:pbf3D} then there exists $\bfp$ so that $(\bfu,\bfp)$
solves~\eqref{eq:bfgp3D}. Conversely, if $\bfu$ solves~\eqref{eq:bfgp3D} then
it is also a solution of~\eqref{eq:pbf3D}.
\label{pr:1}
\end{proposition}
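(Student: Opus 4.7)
The plan is to treat the two implications separately: the forward direction $(\text{pbf3D})\Rightarrow(\text{bfgp3D})$ requires constructing the auxiliary field $\bfp$ and identifying $\mu^{-1}\curl\bfu$ with $\bfg$, while the converse is a short identity computation.

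For the forward direction, I would first produce a vector potential for $\eps\bfu$. Since $\bfu$ solves~\eqref{eq:pbf3D}, we have $\div(\eps\bfu)=0$, so on the contractible domain $\Omega$ a standard de Rham / Girault--Raviart type result yields some $\tilde\bfp\in\hcurl$ with $\curl\tilde\bfp=\eps\bfu$. To promote $\tilde\bfp$ to an element of $\Hodivom$ without affecting its curl, I would correct by a gradient: solve the Neumann problem for $\phi\in H^1(\Omega)/\RE$ with $\div(\mu\grad\phi)=-\div(\mu\tilde\bfp)$ in $\Omega$ and $(\mu\grad\phi)\cdot\bfn=-(\mu\tilde\bfp)\cdot\bfn$ on $\partial\Omega$; the ellipticity given by~\eqref{eq:epsmu} makes this problem well-posed, and $\bfp:=\tilde\bfp+\grad\phi$ then lies in $\Hodivom$ and satisfies $\curl\bfp=\eps\bfu$.

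The identification $\mu^{-1}\curl\bfu=\bfg$ is the heart of the argument. Setting $\bfw:=\mu^{-1}\curl\bfu-\bfg$, I would check that $\bfw\in\Hodivom$ and $\curl\bfw=0$. Indeed, $\bfg\in\Hodivom$ by assumption, while $\div(\mu\cdot\mu^{-1}\curl\bfu)=\div\curl\bfu=0$ and the boundary condition $\bfu\times\bfn=0$ gives $(\curl\bfu)\cdot\bfn=0$ via the tangential Stokes identity, so $\mu^{-1}\curl\bfu\in\Hodivom$ as well; and $\curl\bfw=\curl(\mu^{-1}\curl\bfu)-\curl\bfg=\eps\bff-\eps\bff=0$. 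Contractibility of $\Omega$ now yields $\bfw=\grad\psi$ for some $\psi\in H^1(\Omega)$, and the conditions $\div(\mu\grad\psi)=0$ together with $(\mu\grad\psi)\cdot\bfn=0$ force $\grad\psi=0$ by the standard energy argument $\int_\Omega\mu|\grad\psi|^2=0$, hence $\bfw=0$.

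The converse is immediate: assuming $(\bfu,\bfp)$ solves~\eqref{eq:bfgp3D}, applying $\curl$ to $\mu^{-1}\curl\bfu=\bfg$ gives $\curl(\mu^{-1}\curl\bfu)=\curl\bfg=\eps\bff$, applying $\div$ to $\eps\bfu=\curl\bfp$ gives $\div(\eps\bfu)=0$, and the boundary condition on $\bfu$ is shared between the two systems. The main obstacle is really the pair of steps using contractibility, namely the existence of the vector potential and the rigidity of curl-free fields in $\Hodivom$; on non-contractible domains harmonic fields would intervene, consistent with the authors' remark that the contractible case is treated first.
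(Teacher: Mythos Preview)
Your proposal is correct and follows essentially the same route as the paper's proof: both directions proceed by producing a vector potential for $\eps\bfu$ (with the gauge fixed by adding a gradient), then writing $\mu^{-1}\curl\bfu-\bfg$ as a gradient and killing it via the weighted Neumann problem, with the converse obtained by applying $\curl$ and $\div$. Your write-up is simply more explicit about the intermediate steps (the Neumann correction for the gauge, the verification that $\mu^{-1}\curl\bfu\in\Hodivom$, and the energy identity), which the paper leaves implicit.
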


\begin{proof}

If $\bfu$ solves~\eqref{eq:pbf3D}, then from $\div(\eps\bfu)=0$ we get that
there exists $\bfp$ such that $\eps\bfu=\curl\bfp$. Such $\bfp$ is defined up
to an additive gradient that can be chosen such that~\eqref{eq:gauge3D} is
satisfied.
Then, from the first equation in~\eqref{eq:pbf3D}, we have
$\curl(\mu^{-1}\curl\bfu-\bfg)=0$ which implies that
$\mu^{-1}\curl\bfu-\bfg=\grad\phi$ for some $\phi$ in $\Hu$.
On the other hand, from $\div(\mu\bfg)=0$ it follows $\div(\mu\grad\phi)=0$ in
$\Omega$, and the boundary conditions on $\bfu$ and $\bfg$ imply
$(\mu\grad\phi)\cdot\bfn=0$ on $\partial\Omega$; hence $\phi$ is constant,
from which we conclude that $\mu^{-1}\curl\bfu-\bfg=0$.

Conversely, taking the $\curl$ of the second equation in~\eqref{eq:bfgp3D}, we
get that $\bfu$ solves~\eqref{eq:pbf3D}.
The divergence free condition follows from $\eps\bfu=\curl\bfp$.

\end{proof}

In the spirit of Least-Squares formulation, this equivalence leads to the
minimization of the following functional
\begin{equation}
\mathcal{F}(\bfv,\bfq)=\|\eps^{1/2}\bfv-\eps^{-1/2}\curl\bfq\|_0^2
+\|\mu^{-1/2}\curl\bfv-\mu^{1/2}\bfg\|_0^2
\label{eq:LSbfgp}
\end{equation}
in the energy space $\Hocurl\times\Hcurl$, where we split $\eps$ and $\mu$ as
the square of their square roots in order to get a symmetric system when
considering the gradient of $\mathcal{F}$.

As mentioned above, the uniqueness of $\bfp$ requires the additional
conditions stated in~\eqref{eq:gauge3D}. This could be enforced by changing
the energy space for the minimization of~\eqref{eq:LSbfgp} to
$\Hocurl\times(\Hcurl\cap\Hodivom)$. Clearly, this would lead to the same
troubles described before related to the approximation of the space
$\Hcurl\cap\Hodivom$.
We postpone the discussion about this issue and we start our investigations
considering the two dimensional counterpart of~\eqref{eq:pb3D}.

\section{Problem setting in two dimensions}
\label{se:2D}

Let $\Omega$ be a polygonal domain in $\RE^2$. The eigenvalue problem we are
interested in, seeks for $\lambda\in\RE$ and a non-vanishing $\bfu$ such that
\begin{equation}
\left\{
\aligned
&\curl(\mu^{-1}\rot\bfu)=\lambda\eps\bfu&&\text{in }\Omega\\
&\div(\eps\bfu)=0&&\text{in }\Omega\\
&\bfu\cdot\bft=0&&\text{on }\partial\Omega,
\endaligned
\right.
\label{eq:pb2D}
\end{equation}
where $\bft$ is the counterclockwise tangent unit vector to the boundary of
the domain $\Omega$.
We recall the two dimensional definitions of the $\curl$ and $\rot$ operators:
\[
\rot\bfv=\frac{\partial v_2}{\partial x}-\frac{\partial v_1}{\partial
y}\qquad\text{with }\bfv=(v_1,v_2)^\top
\]
and
\[
\curl\varphi=\left(\frac{\partial\varphi}{\partial y},
-\frac{\partial\varphi}{\partial x}\right)^\top.
\]
For completeness, we also recall the integration by parts formula that
involves these operators
\[
\int_\Omega\curl(\mu^{-1}\rot\bfu)\cdot\bfv\,d\bfx
=\int_\Omega\mu^{-1}\rot\bfu\rot\bfv\,d\bfx
-\int_{\partial\Omega}\mu^{-1}\rot\bfu\ \bfv\cdot\bft\,ds
\]
which is valid whenever the involved integrals are finite.

The source problem corresponding to~\eqref{eq:pb2D} reads: given $\bff$ with
$\div(\eps\bff)=0$, find $\bfu$ such that
\begin{equation}
\left\{
\aligned
&\curl(\mu^{-1}\rot\bfu)=\eps\bff&&\text{in }\Omega\\
&\div(\eps\bfu)=0&&\text{in }\Omega\\
&\bfu\cdot\bft=0&&\text{on }\partial\Omega
\endaligned
\right.
\label{eq:pbf2D}
\end{equation}
and the two dimensional version of the first order system~\eqref{eq:bfgp3D}
is: find $\bfu$ and $p$ such that
\begin{equation}
\left\{
\aligned
&\eps\bfu=\curl p&&\text{in }\Omega\\
&\mu^{-1}\rot\bfu=g&&\text{in }\Omega\\
&\bfu\cdot\bft=0&&\text{on }\partial\Omega
\endaligned
\right.
\label{eq:bfgp2D}
\end{equation}
with $\mu g\in\Ldo$ such that $\curl g=\eps\bff$, where $\Ldo$ is the subspace of
$L^2(\Omega)$ of zero mean valued functions.  In this case the uniqueness of
$p$ is guaranteed by the condition
\[
\int_\Omega\mu p\,d\bfx=0.
\]

The following proposition is the analogue of Proposition~\ref{pr:1}. We recall
it here in the two dimensional setting for the reader's convenience.

\begin{proposition}
Let us consider problems~\eqref{eq:pbf2D} and~\eqref{eq:bfgp2D} with
$\eps\bff=\curl g$ and $\int_\Omega\mu g=0$.
If $\bfu$ solves~\eqref{eq:pbf2D}, then there exists $p$ such that $(\bfu,p)$
is solution of~\eqref{eq:bfgp2D}. Conversely, if $(\bfu,p)$ is solution
of~\eqref{eq:bfgp2D} then $\bfu$ solves~\eqref{eq:pbf2D}.
\end{proposition}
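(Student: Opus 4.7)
The plan is to mimic the strategy of Proposition~\ref{pr:1}, adapted to the two dimensional setting in which the auxiliary variable is a scalar $p$ and the gauge condition reduces to the zero-average requirement $\int_\Omega\mu p\,d\bfx=0$. The two implications will be treated separately; the reverse direction is immediate, while the forward direction requires identifying a scalar potential and showing that a certain constant of integration vanishes.

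For the forward implication, I would first exploit $\div(\eps\bfu)=0$ in the contractible (polygonal) domain $\Omega$ to produce a scalar stream function $p$ with $\eps\bfu=\curl p$; this is the standard 2D identity $\div\curl=0$ read in reverse. Such a $p$ is defined up to an additive constant, which I would fix by imposing $\int_\Omega\mu p\,d\bfx=0$, giving the uniqueness normalization of~\eqref{eq:bfgp2D}. Next, I would apply $\curl$ to the first equation of~\eqref{eq:pbf2D} and subtract $\curl g=\eps\bff$ to obtain $\curl(\mu^{-1}\rot\bfu-g)=\bfzero$ in $\Omega$. Since $\curl$ acting on a scalar annihilates exactly the constants, this forces $\mu^{-1}\rot\bfu-g=c$ for some $c\in\RE$.

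The main obstacle, as I see it, is to show that this constant $c$ vanishes, since the 2D argument cannot rely on a divergence-plus-normal-trace gauge as in the 3D proof. My plan is to test the identity $\mu^{-1}\rot\bfu-g=c$ against $\mu$ and integrate over $\Omega$. The right hand side produces $c\int_\Omega\mu\,d\bfx$, while on the left the $g$-contribution vanishes by the hypothesis $\int_\Omega\mu g\,d\bfx=0$, and the $\rot\bfu$-contribution becomes $\int_\Omega\rot\bfu\,d\bfx=\int_{\partial\Omega}\bfu\cdot\bft\,ds=0$ by Stokes' theorem and the boundary condition $\bfu\cdot\bft=0$. Since $\mu$ is bounded below by $\underline\mu>0$, $\int_\Omega\mu\,d\bfx>0$, so $c=0$ and the second equation of~\eqref{eq:bfgp2D} holds.

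For the converse, I would simply take $\curl$ of the second equation in~\eqref{eq:bfgp2D} to obtain $\curl(\mu^{-1}\rot\bfu)=\curl g=\eps\bff$, recovering the first equation of~\eqref{eq:pbf2D}. The divergence-free condition $\div(\eps\bfu)=0$ follows at once from $\eps\bfu=\curl p$ together with $\div\curl=0$, and the tangential boundary condition is inherited directly from~\eqref{eq:bfgp2D}. This completes the proof sketch; no further regularity arguments are needed since all manipulations take place in $\bfL^2(\Omega)$ and use only the distributional identities guaranteed by the functional spaces introduced above.
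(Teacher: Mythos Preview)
Your proof is correct and follows essentially the same approach as the paper: both obtain $p$ from $\div(\eps\bfu)=0$, deduce that $\mu^{-1}\rot\bfu-g$ is constant, and kill that constant by combining $\int_\Omega\mu g=0$ with $\int_\Omega\rot\bfu=\int_{\partial\Omega}\bfu\cdot\bft\,ds=0$. One minor wording slip: you do not ``apply $\curl$'' to the first equation of~\eqref{eq:pbf2D} --- that equation already reads $\curl(\mu^{-1}\rot\bfu)=\eps\bff=\curl g$, and you simply subtract.
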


\begin{proof}

If $\bfu$ solves~\eqref{eq:pbf2D}, then from $\div(\eps\bfu)=0$ we get that
there exists $p$ such that $\eps\bfu=\curl p$. Since $p$ is defined up to an
additive constant, we choose it such that the mean value of $\mu p$ is zero on
$\Omega$.
Then, from the first equation in~\eqref{eq:pbf2D}, we have
$\curl(\mu^{-1}\rot\bfu-g)=0$ which implies that $\mu^{-1}\rot\bfu-g$ is
constant in $\Omega$.
On the other hand, the boundary conditions on $\bfu$ imply that the average of
$\rot\bfu$ is zero; hence the average of $\rot\bfu-\mu g$ is zero,
from which we conclude that $\rot\bfu-\mu g=0$.

Conversely, taking the $\curl$ of the second equation in~\eqref{eq:bfgp2D}, we
get that $\bfu$ solves~\eqref{eq:pbf2D}.
The divergence free condition follows from $\eps\bfu=\curl p$.

\end{proof}

It is well known that in two dimensions the Maxwell system we are considering,
is equivalent to a Neumann problem for the Laplace equation.

\begin{proposition}
The component $p$ of the solution of~\eqref{eq:bfgp2D} is the solution of the
following equation
\[
\left\{
\aligned
&\rot(\eps^{-1}\curl  p)=\mu g&&\text{in }\Omega\\
&\frac{\partial p}{\partial\bfn}=0&&\text{on }\partial\Omega.
\endaligned
\right.
\]
\label{pr:neumann}
\end{proposition}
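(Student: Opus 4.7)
The plan is to eliminate $\bfu$ from system~\eqref{eq:bfgp2D} and read off an equation purely in terms of $p$, together with the appropriate boundary condition derived from the tangential condition on $\bfu$.

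First I would use the first equation $\eps\bfu=\curl p$ to write $\bfu=\eps^{-1}\curl p$, and substitute this expression into the second equation $\mu^{-1}\rot\bfu=g$. This immediately yields $\mu^{-1}\rot(\eps^{-1}\curl p)=g$ in $\Omega$, i.e.\ the PDE $\rot(\eps^{-1}\curl p)=\mu g$. No regularity subtlety is expected here, since both $\eps\bfu\in\bfL^2(\Omega)$ and $\rot\bfu\in L^2(\Omega)$ by the assumptions on the data.

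Second, I would translate the tangential boundary condition $\bfu\cdot\bft=0$ into a Neumann condition on $p$. The key identity is that, with $\bft$ the counterclockwise unit tangent and $\bfn$ the outward unit normal, one has $\bft=(-n_2,n_1)^\top$, and therefore for any scalar $p$
\[
\curl p\cdot\bft=\left(\frac{\partial p}{\partial y}\right)(-n_2)+\left(-\frac{\partial p}{\partial x}\right)(n_1)=-\grad p\cdot\bfn=-\frac{\partial p}{\partial\bfn}.
\]
Combining with $\eps\bfu=\curl p$ and using the positivity of $\eps$, the condition $\bfu\cdot\bft=0$ on $\partial\Omega$ becomes $\partial p/\partial\bfn=0$ on $\partial\Omega$.

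Neither step presents a real obstacle: the proof is essentially a direct substitution plus the standard pointwise identity relating $\curl p\cdot\bft$ to the normal derivative of $p$ in two dimensions. The only point requiring mild care is making sure the manipulations are justified in the function-space setting at hand (so that the boundary trace $\partial p/\partial\bfn$ is meaningful), which follows from the fact that $p$ inherits the regularity needed from $\curl p=\eps\bfu\in\bfL^2(\Omega)$ together with $\rot(\eps^{-1}\curl p)=\mu g\in L^2(\Omega)$.
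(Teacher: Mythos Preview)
Your argument is correct: substituting $\bfu=\eps^{-1}\curl p$ into $\mu^{-1}\rot\bfu=g$ gives the PDE, and the identity $\curl p\cdot\bft=-\partial p/\partial\bfn$ converts the tangential condition on $\bfu$ into the Neumann condition on $p$. The paper actually states this proposition without proof (it is a well-known elementary fact), so your direct derivation is exactly the intended justification.
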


\begin{remark}
\label{re:neumann}
A natural question, that will be essential for the analysis of the
discretization, is the regularity of the solution of~\eqref{eq:bfgp2D}. Thanks
to Proposition~\ref{pr:neumann}, we can discuss first the regularity of $p$
and then consider that $\eps\bfu=\curl p$. Clearly, the regularity of $p$
depends on the regularity of $g$, $\eps$, and $\mu$, and of the domain
$\Omega$. In general, it is well known that if $\Omega$ is a polygon then
there exists $s\in(1/2,1)$ such that $p\in H^{1+s}(\Omega)$ whenever
$\mu g\in\Ldo$. Since $\bfu=\eps^{-1}\curl p$ we have that 
$\bfu\in H^s(\Omega)$ and $\rot\bfu=\mu\bfg$. Moreover the following a priori
estimate hold true
\[
\|\bfu\|_s+\|p\|_{1+s}\le C\|g\|_0.
\]

Even for smoother $g$, $\eps$, and $\mu$, there are domains
where the regularity of $p$ is not higher. For instance, if $\Omega$ is the
L-shaped domain, then $s$ cannot be taken in general larger than or equal to
$2/3$.

For the analysis of the convergence of the eigenvalue problem, we are going
to consider $g\in\Hu$. For $\eps$ and $\mu$ smooth and nonsingular domains, we
have in this case $p\in H^3(\Omega)$.

\end{remark}

In the framework of Least-Squares finite elements, we are then led to the
minimization of the functional
\begin{equation}
\mathcal{F}(\bfv,q)=\|\eps^{1/2}\bfv -\eps^{-1/2}\curl q\|_0^2
+\|\mu^{-1/2}\rot\bfv-\mu^{1/2}g\|_0^2
\label{eq:LSbfgp2D}
\end{equation}
in the space $\V\times Q$, where $\V$ and $Q$  are defined as follows
\[
\aligned
&\V=\Horot\\
&Q=\{q\in H^1(\Omega):\mu q\in\Ldo\}
\endaligned
\]
and are equipped by the norm induced by the following scalar products
\[
\aligned
&(\bfu,\bfv)_{\V}=(\eps\bfu,\bfv)+(\mu^{-1}\rot\bfu,\rot\bfv)\\
&(p,q)_Q=(\eps^{-1}\curl p,\curl q).
\endaligned
\]
From the assumptions~\eqref{eq:epsmu} on $\eps$ and $\mu$, and from the
Poincar\'e inequality, the induced norms $\|\cdot\|_{\V}$ and $\|\cdot\|_Q$
are equivalent to the standard ones.

A variational formulation of~\eqref{eq:LSbfgp2D} is given by: find $\bfu\in\V$
and $p\in Q$ such that
\begin{equation}
\left\{
\aligned
&(\eps\bfu,\bfv)+(\mu^{-1}\rot\bfu,\rot\bfv)-(\bfv,\curl p)=
(g,\rot\bfv)&&\forall\bfv\in\V\\
&-(\bfu,\curl q)+(\eps^{-1}\curl p,\curl q)=0&&\forall q\in Q.
\endaligned
\right.
\label{eq:LSbfgpvar2D}
\end{equation}

The next proposition states the ellipticity of the bilinear form associated
with the above problem.

\begin{proposition}

Let
\[
a:\left(\V\times Q\right)\times\left(\V\times Q\right)\to\RE
\]
be the bilinear form associated with the
formulation~\eqref{eq:LSbfgpvar2D}, that is
\[
a(\bfu,p;\bfv,q)=(\mu^{-1}\rot\bfu,\rot\bfv)
+(\eps^{1/2}\bfu-\eps^{-1/2}\curl p,\eps^{1/2}\bfv-\eps^{-1/2}\curl q).
\]
Then there exists $\alpha>0$ such that
\[
a(\bfv,q;\bfv,q)\ge\alpha\left(\|\bfv\|^2_{\V}+\|q\|^2_Q\right).
\]
\label{pr:ell2D}
\end{proposition}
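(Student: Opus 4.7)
The plan is to test $a$ diagonally, identify its kernel, and then promote the resulting non-degeneracy to a quantitative coercivity bound via a compactness argument. Setting $(\bfu,p)=(\bfv,q)$ produces the manifestly nonnegative quantity
\[
a(\bfv,q;\bfv,q) = \|\mu^{-1/2}\rot\bfv\|_0^2 + \|\eps^{1/2}\bfv-\eps^{-1/2}\curl q\|_0^2,
\]
so the first step is to characterise its zero set. If this expression vanishes, then $\rot\bfv=0$ and $\eps\bfv=\curl q$ in $\Omega$, while $\bfv\in\V$ forces $\bfv\cdot\bft=0$ on $\partial\Omega$. Substituting $\bfv=\eps^{-1}\curl q$ into $\rot\bfv=0$ gives $\rot(\eps^{-1}\curl q)=0$, and the tangential condition translates, exactly as in the proof of Proposition~\ref{pr:neumann}, into $\frac{\partial q}{\partial\bfn}=0$ on $\partial\Omega$. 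Hence $q$ is a constant, and the gauge $\mu q\in\Ldo$ together with $\mu>0$ forces $q=0$, whence $\bfv=0$.

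A direct Young-type estimate turns out to be insufficient: rewriting $2(\bfv,\curl q)=2(\rot\bfv,q)$ by integration by parts and bounding $\|q\|_0$ by $\|\curl q\|_0$ via the weighted Poincar\'e inequality for functions with $\int_\Omega\mu q\,d\bfx=0$ introduces a constant involving $\overline{\eps}\,\overline{\mu}$ that need not be less than one, so the cross-term cannot be absorbed in general. I therefore argue by contradiction. Suppose no $\alpha>0$ works; then there is a sequence $(\bfv_n,q_n)\in\V\times Q$ with $\|\bfv_n\|_{\V}^2+\|q_n\|_Q^2=1$ and $a(\bfv_n,q_n;\bfv_n,q_n)\to 0$, whence $\rot\bfv_n\to 0$ and $\eps\bfv_n-\curl q_n\to 0$ strongly in $\bfL^2(\Omega)$. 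Because $\|q_n\|_Q$ controls $\|\grad q_n\|_0$ and Poincar\'e controls $\|q_n\|_0$, the sequence $q_n$ is bounded in $H^1(\Omega)$ and hence, by Rellich, converges along a subsequence strongly in $L^2(\Omega)$ and weakly in $H^1(\Omega)$ to some $q$; similarly $\bfv_n\rightharpoonup\bfv$ in $\Horot$. Passing to the limit in the two convergences above one obtains $\rot\bfv=0$ and $\eps\bfv=\curl q$ with $\bfv\cdot\bft=0$, and the first paragraph then forces $q=0$ and $\bfv=0$.

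The main obstacle is now to upgrade this weak convergence to strong convergence of the three quadratic quantities composing the $\V\times Q$ norm, since $\Horot$ does not embed compactly into $\bfL^2(\Omega)$ and Rellich cannot be applied to $\bfv_n$ itself. The way out uses that $\eps\bfv_n-\curl q_n$ already converges strongly to zero in $\bfL^2(\Omega)$. Integration by parts (using $\bfv_n\cdot\bft=0$) yields
\[
(\bfv_n,\curl q_n)=(\rot\bfv_n,q_n)\to 0,
\]
because $\rot\bfv_n\to 0$ strongly in $L^2(\Omega)$ and $q_n\to 0$ strongly in $L^2(\Omega)$. Testing $\eps\bfv_n-\curl q_n\to 0$ against the $\bfL^2$-bounded sequence $\bfv_n$ gives $\|\eps^{1/2}\bfv_n\|_0^2=(\bfv_n,\curl q_n)+o(1)\to 0$, and testing it against $\eps^{-1}\curl q_n$ gives $\|\eps^{-1/2}\curl q_n\|_0^2=(\bfv_n,\curl q_n)+o(1)\to 0$. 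Combined with $\|\mu^{-1/2}\rot\bfv_n\|_0\to 0$, this contradicts the normalisation $\|\bfv_n\|_{\V}^2+\|q_n\|_Q^2=1$ and yields the desired $\alpha>0$.
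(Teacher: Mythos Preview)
Your argument is correct, but it takes a markedly different route from the paper's. The paper proceeds directly and constructively: by adding and subtracting the identity $2\beta(\bfv,\curl q)=2\beta(\mu^{-1/2}\rot\bfv,\mu^{1/2}q)$ for a free parameter $\beta>0$, the authors rewrite $a(\bfv,q;\bfv,q)$ as two nonnegative squares plus $(2\beta-\beta^2)\|q\|_Q^2-\beta^2\|\mu^{1/2}q\|_0^2$, and then choose $\beta$ small enough (relative to the Poincar\'e constant and the bounds on $\eps,\mu$) to make this positive; this yields an explicit $\alpha$, which your compactness--contradiction route cannot. In fact your assertion that ``a direct Young-type estimate turns out to be insufficient'' is too pessimistic: after integrating by parts one has $|(\bfv,\curl q)|=|(\rot\bfv,q)|\le\overline\mu^{1/2}\|\mu^{-1/2}\rot\bfv\|_0\cdot C_P\,\overline\eps^{1/2}\|\eps^{-1/2}\curl q\|_0$, and since $\|\mu^{-1/2}\rot\bfv\|_0^2\le a(\bfv,q;\bfv,q)$ is already controlled, a single Young inequality absorbs the cross term into $\tfrac12\|\eps^{-1/2}\curl q\|_0^2$ with no smallness condition on the material constants. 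It is also worth noting that the Rellich step and the identification of the weak limit in your argument are dispensable: boundedness of $q_n$ in $L^2(\Omega)$ (from Poincar\'e) together with $\rot\bfv_n\to0$ strongly already forces $(\rot\bfv_n,q_n)\to0$ by Cauchy--Schwarz, and your final paragraph then yields the contradiction directly, without ever passing to a subsequence or invoking the kernel characterisation of the first paragraph.
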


\begin{proof}

We start observing that
\[
a(\bfv,q;\bfv,q)=\|\mu^{-1/2}\rot\bfv\|_0^2
+\|\eps^{1/2}\bfv-\eps^{-1/2}\curl q\|_0^2\ge\|\mu^{-1/2}\rot\bfv\|_0^2.
\]

For a positive $\beta$ we have
\[
\aligned
a(\bfv,q;\bfv,q)&=(\mu^{-1}\rot\bfv,\rot\bfv)+(\eps\bfv,\bfv)-2(\bfv,\curl q)
+(\eps^{-1}\curl q,\curl q)\\
&\quad+2\beta(\bfv,\curl q)-2\beta(\mu^{-1/2}\rot\bfv,\mu^{1/2}q)
\pm\beta^2(\mu q,q)\\
&=\|\mu^{-1/2}\rot\bfv-\beta\mu^{1/2}q\|_0^2+(\eps\bfv,\bfv)
-2(1-\beta)(\bfv,\curl q)\\
&\quad+(\eps^{-1}\curl q,\curl q)-\beta^2(\mu q,q)
\pm(1-\beta)^2(\eps^{-1}\curl q,\curl q)\\
&=\|\mu^{-1/2}\rot\bfv-\beta\mu^{1/2}q\|_0^2
+\|\eps^{1/2}\bfv-(1-\beta)\eps^{-1/2}\curl q\|_0^2\\
&\quad-\beta^2(\mu q,q)+(2\beta-\beta^2)\|q\|_Q^2\\
&\ge-\beta^2\|\mu^{-1/2}q\|_0^2+(2\beta-\beta^2)\|q\|_Q^2.
\endaligned
\]
Using the Poincar\'e inequality $\|q\|_0\le C_P\|\curl q\|_0$ and the bounds
in~\eqref{eq:epsmu} we get
\[
a(\bfv,q;\bfv,q)\ge
(2\beta-\beta^2(1+\underline\mu^{-1}\overline\eps C_P^2))\|q\|_Q^2
\]
which for $\beta$ small enough gives
\[
a(\bfv,q;\bfv,q)\ge C_1\|q\|_Q^2.
\]
Finally, we estimate $\|\eps^{1/2}\bfv\|_0$. We have
\[
\|\eps^{1/2}\bfv\|_0\le\|\eps^{1/2}\bfv-\eps^{-1/2}\curl q\|_0
+\|\eps^{-1/2}\curl q\|_0
\]
from which we obtain
\[
\aligned
\|\eps^{1/2}\bfv\|^2_0&\le2\left(
\|\eps^{1/2}\bfv-\eps^{-1/2}\curl q\|^2_0+\|q\|^2_Q\right)\\
&\le 2(1+1/C_1)a(\bfv,q;\bfv,q).
\endaligned
\]

In conclusion, we have the ellipticity result with
\[
\alpha=\frac13\min\left(1,C_1,\frac{1}{2(1+1/C_1)}\right).
\]

\end{proof}

By using the Lax--Milgram lemma, we have existence and uniqueness of the
solution of~\eqref{eq:LSbfgpvar2D}.

\begin{corollary}

Given $g\in\Ldo$ there exists one and only one solution
of~\eqref{eq:LSbfgpvar2D} which satisfies the a priori stability bound
\[
\|\bfu\|_{\V}+\|p\|_Q\le C\|g\|_0.
\]

\end{corollary}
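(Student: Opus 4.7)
The plan is to apply the Lax--Milgram lemma to the variational problem~\eqref{eq:LSbfgpvar2D}, using the ellipticity already established in Proposition~\ref{pr:ell2D}, and to derive the stability bound by testing with the solution itself.

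First I would rewrite~\eqref{eq:LSbfgpvar2D} as a single equation on $\V\times Q$, namely $a(\bfu,p;\bfv,q)=F(\bfv,q)$ for all $(\bfv,q)\in\V\times Q$, where the right-hand side functional is $F(\bfv,q)=(g,\rot\bfv)$. Continuity of $F$ on $\V\times Q$ is immediate from Cauchy--Schwarz: $|F(\bfv,q)|\le\|g\|_0\|\rot\bfv\|_0\le C\|g\|_0\|\bfv\|_\V$, using the equivalence of $\|\cdot\|_\V$ with the standard $\Horot$ norm that was noted right after the definitions of $\V$ and $Q$. Continuity of $a$ on $(\V\times Q)\times(\V\times Q)$ follows similarly from Cauchy--Schwarz applied to each term in its definition together with the bounds~\eqref{eq:epsmu} on $\eps$ and $\mu$.

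Next I would invoke Proposition~\ref{pr:ell2D} to obtain coercivity of $a$ with constant $\alpha>0$, and then the Lax--Milgram lemma to conclude existence and uniqueness of $(\bfu,p)\in\V\times Q$ solving~\eqref{eq:LSbfgpvar2D}.

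Finally, to derive the a priori bound I would test with $(\bfv,q)=(\bfu,p)$. Ellipticity and the continuity estimate for $F$ give
\[
\alpha\bigl(\|\bfu\|_\V^2+\|p\|_Q^2\bigr)\le a(\bfu,p;\bfu,p)=(g,\rot\bfu)\le C\|g\|_0\|\bfu\|_\V
\le C\|g\|_0\bigl(\|\bfu\|_\V+\|p\|_Q\bigr).
\]
Combining this with the elementary inequality $(\|\bfu\|_\V+\|p\|_Q)^2\le 2(\|\bfu\|_\V^2+\|p\|_Q^2)$ yields $\|\bfu\|_\V+\|p\|_Q\le C\|g\|_0$ with a constant depending only on $\alpha$, $\underline\mu$, and $\overline\mu$. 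No step is really an obstacle here; the content of the corollary is essentially a corollary of Proposition~\ref{pr:ell2D} packaged with the trivial continuity of the right-hand side.
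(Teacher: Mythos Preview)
Your proposal is correct and matches the paper's own argument: the paper simply states that the corollary follows from the Lax--Milgram lemma applied with the ellipticity of Proposition~\ref{pr:ell2D}, and you have spelled out exactly those standard details (continuity of $a$ and of the linear functional, coercivity, and the stability bound obtained by testing with the solution).
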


We introduce the solution operator $T:Q\to Q$ defined as follows: given $g\in Q$,
\[
Tg=p,
\]
where $p\in Q$ is the second component of the solution
of~\eqref{eq:LSbfgpvar2D}. The regularity stated in Remark~\ref{re:neumann}
implies that $T$ is compact. The following proposition states that $T$ is
self-adjoint.

\begin{proposition}

For all $g_1$ and $g_2$ in $Q$ it holds
\[
(Tg_1,g_2)_Q=(g_1,Tg_2)_Q.
\]
\label{pr:self-adjoint}
\end{proposition}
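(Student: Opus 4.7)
The plan is to express both $(Tg_1,g_2)_Q$ and $(g_1,Tg_2)_Q$ in the common form $(\rot\bfu_i,g_j)$, by testing the variational system~\eqref{eq:LSbfgpvar2D} with the data itself, and then to conclude via the symmetry of the bilinear form $a$.

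I would write $p_i=Tg_i$, with $(\bfu_i,p_i)\in\V\times Q$ the unique solution of~\eqref{eq:LSbfgpvar2D} corresponding to right-hand side $g_i$. The key observation is that $g_i$ itself belongs to $Q$, so it is an admissible test function in the second equation of~\eqref{eq:LSbfgpvar2D}. Testing that equation for $(\bfu_1,p_1)$ with $q=g_2$ yields
\[
(Tg_1,g_2)_Q=(\eps^{-1}\curl p_1,\curl g_2)=(\bfu_1,\curl g_2),
\]
after which an integration by parts, relying on $g_2\in H^1(\Omega)$ together with the boundary condition $\bfu_1\cdot\bft=0$ built into $\V=\Horot$, reduces this to $(Tg_1,g_2)_Q=(\rot\bfu_1,g_2)$. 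Running the same computation with the roles of the two indices interchanged gives $(g_1,Tg_2)_Q=(\rot\bfu_2,g_1)$.

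It then remains to check that $(\rot\bfu_1,g_2)=(\rot\bfu_2,g_1)$. For this I would use the symmetry of the bilinear form $a$, which is manifest from its expression in Proposition~\ref{pr:ell2D}. Plugging $(\bfv,q)=(\bfu_2,p_2)$ into the variational equation for $(\bfu_1,p_1)$ yields $a(\bfu_1,p_1;\bfu_2,p_2)=(g_1,\rot\bfu_2)$, and swapping the indices gives $a(\bfu_2,p_2;\bfu_1,p_1)=(g_2,\rot\bfu_1)$; since $a$ is symmetric, the two right-hand sides agree, and the proposition follows. I do not foresee a serious obstacle: the argument is a standard \emph{test with the data} manipulation, and the only mild care needed is in the integration by parts, where the essential boundary condition on $\bfu_i$ is crucial.
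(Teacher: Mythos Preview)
Your proof is correct and follows essentially the same route as the paper: both reduce $(Tg_1,g_2)_Q$ to $(\rot\bfu_1,g_2)$ via the second variational equation and integration by parts, and then exploit symmetry. The only cosmetic difference is that you invoke the symmetry of the bilinear form $a$ in one stroke, whereas the paper writes out the intermediate chain $(\rot\bfu_1,g_2)=(\bfu_2,\bfu_1)_{\V}-(\eps^{-1}\curl p_1,\curl p_2)$ explicitly and then swaps indices; the content is identical.
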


\begin{proof}

Let us denote by $\bfu_i\in\V$ the other component of the solution associated
with $Tg_i$ ($i=1,2$). Then, from the definition of $T$ and of the scalar
products in $\V$ and $Q$, it follows
\[
\aligned
(Tg_1,g_2)_Q&=(\eps^{-1}\curl(Tg_1),\curl g_2)=(\bfu_1,\curl g_2)
=(\rot\bfu_1,g_2)\\
&=(\bfu_2,\bfu_1)_{\V}-(\bfu_1,\curl(Tg_2))\\
&=(\bfu_2,\bfu_1)_{\V}-(\eps^{-1}\curl(Tg_1),\curl(Tg_2))\\
&=(\bfu_1,\bfu_2)_{\V}-(\eps^{-1}\curl(Tg_2),\curl(Tg_1))\\
&=(\bfu_1,\bfu_2)_{\V}-(\bfu_2,\curl(Tg_1))\\
&=(\eps^{-1}\curl(Tg_2),\curl g_1)=(g_1,Tg_2)_Q.
\endaligned
\]

\end{proof}

In analogy to what has been done in the case of the Laplace eigenvalue problem
in~\cite{bbLS}, it is then natural to consider the following variational
formulation in order to describe the solutions of the eigenvalue
problem~\eqref{eq:pb2D}. Find $\lambda\in\RE$ and $p\in Q$ with
$p\ne0$ such that for some $\bfu\in\V$ it holds
\begin{equation}
\left\{
\aligned
&(\eps\bfu,\bfv)+(\mu^{-1}\rot\bfu,\rot\bfv)-(\bfv,\curl p)=
\lambda(p,\rot\bfv)&&\forall\bfv\in\V\\
&-(\bfu,\curl q)+(\eps^{-1}\curl p,\curl q)=0&&\forall q\in Q.
\endaligned
\right.
\label{eq:LSbfgpvar2Deig}
\end{equation}

In Problem~\eqref{eq:LSbfgpvar2Deig} we look for real eigenvalues and use real
functional spaces. This is justified by the fact that the underlying operator
is self-adjoint (see Proposition~\ref{pr:self-adjoint}). Moreover, we have the
following orthogonality properties.

\begin{proposition}

Let $\lambda_i\ne\lambda_j$ be two eigenvalues of~\eqref{eq:LSbfgpvar2Deig},
and $(p_i,\bfu_i)$ and $(p_j,\bfu_j)$ the corresponding eigenfunctions. Then
the following orthogonalities are satisfied
\[
\aligned
&(p_i,p_j)_Q=(\eps^{-1}\curl(p_i),\curl(p_j))=0\\
&(\bfu_i,\bfu_j)_{\V}=(\eps\bfu_i,\bfu_j)+(\mu^{-1}\rot(\bfu_i),\rot(\bfu_j))=0.
\endaligned
\]

In case of multiple eigenvalues, the corresponding eigenfunctions can be
chosen so that the same orthogonalities are satisfied.

\end{proposition}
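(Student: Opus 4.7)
The plan is to derive both orthogonality relations from two ingredients: the self-adjointness of the solution operator $T$ established in Proposition~\ref{pr:self-adjoint}, and a short manipulation of the two equations in~\eqref{eq:LSbfgpvar2Deig}. The bridge between the eigenvalue problem and the operator $T$ is the observation that an eigenpair $\bigl((\bfu_i,p_i),\lambda_i\bigr)$ of~\eqref{eq:LSbfgpvar2Deig} is precisely the solution of the source problem~\eqref{eq:LSbfgpvar2Dvar}, i.e.~\eqref{eq:LSbfgpvar2D}, for the datum $g=\lambda_i p_i$. Since the source problem with $g=0$ has only the trivial solution by the corollary following Proposition~\ref{pr:ell2D}, the case $\lambda_i=0$ cannot occur, and I conclude that $Tp_i=p_i/\lambda_i$.

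For the $Q$-orthogonality I would simply apply Proposition~\ref{pr:self-adjoint}:
\[
\tfrac{1}{\lambda_i}(p_i,p_j)_Q=(Tp_i,p_j)_Q=(p_i,Tp_j)_Q=\tfrac{1}{\lambda_j}(p_i,p_j)_Q,
\]
so that $\lambda_i\ne\lambda_j$ forces $(p_i,p_j)_Q=0$.

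For the $\V$-orthogonality I would first test the second equation of~\eqref{eq:LSbfgpvar2Deig} for $(\bfu_i,p_i)$ with $q=p_j$, obtaining $(\bfu_i,\curl p_j)=(\eps^{-1}\curl p_i,\curl p_j)=(p_i,p_j)_Q=0$, and symmetrically $(\bfu_j,\curl p_i)=0$. Then I would test the first equation for $(\bfu_i,p_i)$ with $\bfv=\bfu_j\in\V$, which gives
\[
(\bfu_i,\bfu_j)_\V=(\bfu_j,\curl p_i)+\lambda_i(p_i,\rot\bfu_j).
\]
The first term on the right has just been shown to vanish. For the second term I would invoke the integration-by-parts identity $(p_i,\rot\bfu_j)=(\curl p_i,\bfu_j)$, legitimate because $\bfu_j\cdot\bft=0$ on $\partial\Omega$, which reduces it to the already vanishing quantity $(\bfu_j,\curl p_i)$. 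Hence $(\bfu_i,\bfu_j)_\V=0$.

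For a multiple eigenvalue I would apply Gram--Schmidt orthogonalization inside the (finite-dimensional, as $T$ is compact) eigenspace of $T$ corresponding to $1/\lambda$: this yields eigenfunctions whose $p$-components are $Q$-orthogonal, and the computation just described, which uses only the system~\eqml{eq:LSbfgpvar2Deig} and the vanishing of $(p_i,p_j)_Q$, then automatically produces the $\V$-orthogonality of the associated $\bfu$-components. I do not foresee any serious obstacle; the only point requiring some care is the identification of eigenpairs of~\eqref{eq:LSbfgpvar2Deig} with eigenfunctions of $T$ and the exclusion of $\lambda=0$, both of which follow directly from the well-posedness result for the source problem.
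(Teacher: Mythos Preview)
Your proof is correct. The route differs from the paper's in one respect: for the $Q$-orthogonality you factor through the operator $T$, identifying each eigenpair with an eigenfunction of $T$ (after excluding $\lambda=0$ via well-posedness) and then invoking the self-adjointness established in Proposition~\ref{pr:self-adjoint}. The paper instead works directly on the variational system: testing the first equation of~\eqref{eq:LSbfgpvar2Deig} for $(\bfu_i,p_i,\lambda_i)$ with $\bfv=\bfu_j$ and using $(\bfu_j,\curl p_i)=(p_i,\rot\bfu_j)$ gives $(\bfu_i,\bfu_j)_{\V}=(\lambda_i+1)(p_i,\rot\bfu_j)$, and the symmetric identity with $i,j$ swapped yields $(\lambda_i+1)(p_i,\rot\bfu_j)=(\lambda_j+1)(p_j,\rot\bfu_i)$; the second equation then shows both sides equal $(p_i,p_j)_Q$, whence $(\lambda_i-\lambda_j)(p_i,p_j)_Q=0$. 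Your approach is slightly more conceptual and reuses an already-proved result, at the small cost of having to argue that $\lambda_i\neq0$; the paper's direct computation is self-contained and yields the relation $(\bfu_i,\bfu_j)_{\V}=(\lambda_i+1)(p_i,p_j)_Q$ in one stroke, from which the $\V$-orthogonality follows immediately once $(p_i,p_j)_Q=0$. For the $\V$-orthogonality and the multiple-eigenvalue case your argument is essentially the same as the paper's.
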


\begin{proof}

The result follows in a standard way by testing~\eqref{eq:LSbfgpvar2Deig} with
$\bfv=\bfu_j$ and $q=p_j$ when $\bfu=\bfu_i$, $p=p_i$, and
$\lambda=\lambda_i$. The same equation~\eqref{eq:LSbfgpvar2Deig} with the
roles of $i$ and $j$ swapped,
gives
\[
(\lambda_i+1)(p_i,\rot\bfu_j)=(\lambda_j+1)(p_j,\rot\bfu_i)
\]
which gives the results, observing that
\[
(p_i,\rot\bfu_j)=(\eps^{-1}\curl(p_i),\curl(p_j))=(p_j,\rot\bfu_i).
\]
When $\lambda_i$ is different from $\lambda_j$ this implies the first
orthogonality. The second one follows by inserting the first one into
Equation~\eqref{eq:LSbfgpvar2Deig}.

\end{proof}

\begin{remark}

From the above orthogonalities, it follows by standard arguments that
$Q=\Span\{p_i,\ i=1,\dots\}$. Moreover, from the second equation
of~\eqref{eq:LSbfgpvar2Deig} we have that
$\bfu_i=\eps^{-1}\curl p_i+\grad\phi_i$. Substituting $\bfu_i$ in the first
equation and taking $\bfv=\grad\phi_i$ gives $(\eps\grad\phi_i,\grad\phi_i)=0$
which implies $\grad\phi_i=\mathbf{0}$. It follows that the set of the
$\bfu_i$'s generates the subspace of $\V$ containing the vectorfields $\bfv$
with $\div(\eps\bfv)=0$.
\label{re:Remark}
\end{remark}

\subsection{On the structure of the spectrum}
\label{se:operatorform}

Eigenvalue problems in the form of~\eqref{eq:LSbfgpvar2Deig} are not standard
and, to the best of our knowledge, have been mainly used when discussing the
spectrum of operators arising from the Least-Squares finite element method.

Problem~\eqref{eq:LSbfgpvar2Deig} can be written as follows in terms of
operators:
\begin{equation}
\begin{pmatrix}
\sfA&\sfB^\top\\
\sfB&\sfC
\end{pmatrix}
\begin{pmatrix}
\sfx\\
\sfy
\end{pmatrix}
=\lambda
\begin{pmatrix}
\sfzero&\sfD\\
\sfzero&\sfzero
\end{pmatrix}
\begin{pmatrix}
\sfx\\
\sfy
\end{pmatrix}.
\label{eq:operatorform}
\end{equation}
The aim of this subsection is to collect some results about the structure of
the eigensolutions, discussing in particular the consequences of the
possible degeneracy of the right hand side of~\eqref{eq:operatorform}.

First of all, even if the problem does not seem symmetric, it originates
from~\eqref{eq:pb2D} which is associated with a self-adjoint solution
operator. Indeed, after observing that $\sfD=-\sfB^\top$, we can argue as
in~\cite{bbLS} to show that~\eqref{eq:operatorform} is equivalent to the
symmetric problem
\[
\begin{pmatrix}
\sfA&\sfzero\\
\sfzero&\sfzero
\end{pmatrix}
\begin{pmatrix}
\sfx\\
\sfy
\end{pmatrix}
=(\lambda+1)
\begin{pmatrix}
\sfzero&-\sfB^\top\\
-\sfB&-\sfC
\end{pmatrix}
\begin{pmatrix}
\sfx\\
\sfy
\end{pmatrix}
\]
and to the symmetric Schur complement formulations
\begin{gather*}
\sfA\sfx=(\lambda+1)\sfB^\top\sfC^{-1}\sfB\sfx\\
\sfC\sfy=(\lambda+1)\sfB\sfA^{-1}\sfB^\top\sfy.
\end{gather*}

We now proceed with some comments that are particularly important in view of
the numerical approximation, and of the three-dimensional extension.

Let us assume that $(\sfx,\sfy)^\top$ is such that the right hand side
of~\eqref{eq:operatorform} is vanishing. Then, $\sfB^\top\sfy=\sfzero$ which
implies $\curl p=0$, that is $p=0$ due to zero mean value condition on $p$.
This is not admissible, since Problem~\eqref{eq:LSbfgpvar2Deig} seeks a non
vanishing $p$.
On the other hand, the numerical approximation of~\eqref{eq:operatorform} will
have the analogous form of a generalized algebraic eigenvalue problem
involving matrices and vectors that we denote with the same symbols. It
happens that, if we follow the same argument as before (see also
Remark~\ref{re:Remark}), we may have solutions that correspond to a generic
$\bfx$ and to $\sfy=\sfzero$. These solutions correspond to eigenvalues
$\lambda=\infty$ that should be discarded in view of the condition $p\ne0$.
This should be taken into account when the numerical results are performed.

It is also interesting to observe what happens if we relax the zero mean value
condition on the space $Q$. In such case there exists a non vanishing $\sfy$
such that $\sfB^\top\sfy=-\sfD\sfy=\sfzero$. Such $\sfy$ corresponds to a
$\curl$-free $p$, that is $p$ constant. If we combine
$\sfy\in\ker(\sfB^\top)=\ker(\sfD)$ with a vanishing $\sfx$, we can see a
singular behavior of~\eqref{eq:operatorform} that can be summarized by the
equation
\[
\sfzero=\lambda\sfzero,
\]
that is, $\lambda$ cannot be determined. Notice, that this singular behavior
doesn't occur if, for instance, the zero mean value condition is dropped in
the case of the standard Galerkin approximation of the Neumann Laplace
eigenproblem: in such case the original spectrum is modified by adding a
vanishing eigenvalue that corresponds to the constant eigenfunction.

\section{Two dimensional finite element analysis}
\label{se:2Dnum}

Let us consider finite dimensional subspaces $\V_h\subset\V$ and
$Q_h\subset Q$.
The approximation of~\eqref{eq:LSbfgpvar2D} consists in finding
$\bfu_h\in\V_h$ and $p_h\in Q_h$ such that
\begin{equation}
\left\{
\aligned
&(\eps\bfu_h,\bfv)+(\mu^{-1}\rot\bfu_h,\rot\bfv)-(\bfv,\curl p_h)=
(g,\rot\bfv)&&\forall\bfv\in\V_h\\
&-(\bfu_h,\curl q)+(\eps^{-1}\curl p_h,\curl q)=0&&\forall q\in Q_h
\endaligned
\right.
\label{eq:LSbfgpvar2Dh}
\end{equation}
and, correspondingly, the discrete eigenvalue problem we are interested in,
reads: find $\lambda_h\in\RE$ and $p_h\in Q_h$ with $p_h\ne0$ such that for
some $\bfu_h\in\V_h$ it holds
\begin{equation}
\left\{
\aligned
&(\eps\bfu_h,\bfv)+(\mu^{-1}\rot\bfu_h,\rot\bfv)-(\bfv,\curl p_h)=
\lambda_h(p_h,\rot\bfv)&&\forall\bfv\in\V_h\\
&-(\bfu_h,\curl q)+(\eps^{-1}\curl p_h,\curl q)=0&&\forall q\in Q_h.
\endaligned
\right.
\label{eq:LSbfgpvar2Deigh}
\end{equation}

We start our analysis of the discrete problem by discussing the convergence of
the solution of~\eqref{eq:LSbfgpvar2Dh} towards the solution
of~\eqref{eq:LSbfgpvar2D}. This is a standard result in the framework of
finite element Least-Squares approximations that follows from the coercivity
of the system recalled in Proposition~\ref{pr:ell2D}.

In the following theorem we recall the a priori error analysis that is a
standard consequence of C\'ea's lemma.

\begin{theorem}
Given $g\in Q$, let $(\bfu,p)$ be the solution of~\eqref{eq:LSbfgpvar2D} and
$(\bfu_h,p_h)$ the corresponding discrete solution of~\eqref{eq:LSbfgpvar2Dh}.
Then the following estimate holds true
\[
\|\bfu-\bfu_h\|_{\V}+\|p-p_h\|_Q\le C\inf_{(\bfv,q)\in\V_h\times Q_h}
\left(\|\bfu-\bfv\|_{\V}+\|p-q\|_Q\right)
\]
\label{th:cea}
\end{theorem}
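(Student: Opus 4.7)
The plan is to apply the standard C\'ea argument, since the excerpt has already done the non-trivial work by establishing coercivity of the bilinear form $a$ in Proposition~\ref{pr:ell2D}. First I would recast \eqref{eq:LSbfgpvar2D} and \eqref{eq:LSbfgpvar2Dh} in the compact form $a(\bfu,p;\bfv,q)=F(\bfv,q)$ with $F(\bfv,q)=(g,\rot\bfv)$, verifying that $F$ is a bounded linear functional on $\V\times Q$ via $|F(\bfv,q)|\le\|g\|_0\|\rot\bfv\|_0\le C\|g\|_0\|\bfv\|_{\V}$, which incidentally also yields the a priori stability bound stated in the Corollary following Proposition~\ref{pr:ell2D}.

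Next I would establish continuity of $a$ on $(\V\times Q)\times(\V\times Q)$. Expanding
\[
a(\bfu,p;\bfv,q)=(\mu^{-1}\rot\bfu,\rot\bfv)
+(\eps^{1/2}\bfu-\eps^{-1/2}\curl p,\eps^{1/2}\bfv-\eps^{-1/2}\curl q),
\]
each term is bounded via Cauchy--Schwarz together with the uniform bounds \eqref{eq:epsmu} on $\eps$ and $\mu$ and the definitions of $\|\cdot\|_{\V}$ and $\|\cdot\|_Q$. One obtains a constant $M>0$ with $|a(\bfu,p;\bfv,q)|\le M(\|\bfu\|_{\V}+\|p\|_Q)(\|\bfv\|_{\V}+\|q\|_Q)$.

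The third step is Galerkin orthogonality: since $\V_h\times Q_h\subset\V\times Q$, subtracting \eqref{eq:LSbfgpvar2Dh} from \eqref{eq:LSbfgpvar2D} tested against any $(\bfv_h,q_h)\in\V_h\times Q_h$ yields
\[
a(\bfu-\bfu_h,p-p_h;\bfv_h,q_h)=0\qquad\forall(\bfv_h,q_h)\in\V_h\times Q_h.
\]
Then for arbitrary $(\bfv,q)\in\V_h\times Q_h$, coercivity gives
\[
\alpha\bigl(\|\bfu-\bfu_h\|_{\V}^2+\|p-p_h\|_Q^2\bigr)\le a(\bfu-\bfu_h,p-p_h;\bfu-\bfu_h,p-p_h),
\]
and inserting $\bfu-\bfu_h=(\bfu-\bfv)+(\bfv-\bfu_h)$, $p-p_h=(p-q)+(q-p_h)$ in the second slot, together with the Galerkin orthogonality, reduces this to $a(\bfu-\bfu_h,p-p_h;\bfu-\bfv,p-q)$. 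Applying continuity and dividing yields
\[
\|\bfu-\bfu_h\|_{\V}+\|p-p_h\|_Q\le \tfrac{M}{\alpha}\bigl(\|\bfu-\bfv\|_{\V}+\|p-q\|_Q\bigr),
\]
where I have used the elementary inequality $(x^2+y^2)^{1/2}\le x+y$ in the left factor and $xy+zw\le(x+z)(y+w)$ on the right; taking the infimum over $(\bfv,q)\in\V_h\times Q_h$ completes the argument with $C=M/\alpha$.

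There is essentially no obstacle: coercivity is the substantive ingredient and it is already established, so the theorem is a direct instance of C\'ea's lemma. The only minor bookkeeping is keeping track of the mean-value constraint in $Q$ when choosing the approximant $q\in Q_h$, but since the approximation spaces $Q_h\subset Q$ are assumed to already incorporate this constraint, nothing needs to be done.
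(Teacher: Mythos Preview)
Your proposal is correct and follows exactly the approach the paper indicates: the paper does not give a proof at all, simply stating that the result ``is a standard consequence of C\'ea's lemma,'' and you have spelled out precisely that standard argument using the coercivity from Proposition~\ref{pr:ell2D}, continuity from Cauchy--Schwarz and~\eqref{eq:epsmu}, and Galerkin orthogonality from the conforming inclusion $\V_h\times Q_h\subset\V\times Q$.
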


In order to study the convergence of the eigensolutions
of~\eqref{eq:LSbfgpvar2Deigh} towards those of~\eqref{eq:LSbfgpvar2Deig} a
standard tool is the convergence in norm of the sequence of discrete solution
operators towards the continuous one. Indeed, in~\cite{bbg} it was shown that
for symmetric problems the convergence in norm is not only sufficient but also
necessary for the convergence of the eigensolutions.

Analogously to what we have done in the continuous case, let $T_h:Q\to Q$ be
the discrete solution operator which associated to $g\in Q$ the component
$p_h\in Q_h\subset Q$ of the solution of~\eqref{eq:LSbfgpvar2Dh}. A necessary
and sufficient condition for the convergence of our eigensolutions is the
existence of $\rho(h)$, tending to zero as $h$ goes to zero, such that
\begin{equation}
\|(T-T_h)g\|_Q\le\rho(h)\|g\|_Q.
\label{eq:cunif}
\end{equation}

The most natural choice of finite element spaces is to consider N\'ed\'elec
\emph{edge} elements for $\V_h$ and standard Lagrange \emph{nodal} elements for
$Q_h$, that is, for $k\ge0$,
\begin{equation}
\aligned
&\V_h=\{\bfv\in\V:\bfv|_K\in\Ned_k(K)\ \forall K\in\T_h\}\\
&Q_h=\{q\in Q:q|_K\in\Po_{k+1}(K)\ \forall K\in\T_h\},
\endaligned
\label{eq:spaces}
\end{equation}
where $\Po_k(K)$ is the space of polynomials on $K$ of degree not exceeding
$k$ and
\[
\Ned_k(K)=[\Po_k(K)]^2+\Po_k(K)[(x,y)]^\top.
\]
Other possible choices would involve different order of approximation for the
two spaces.

From the standard approximation properties of these spaces, and
assumption~\eqref{eq:epsmu}, Theorem~\ref{th:cea} gives
\[
\|(T-T_h)g\|_Q=\|p-p_h\|_Q\le C\inf_{(\bfv,q)\in\V_h\times Q_h}
\left(\|\bfu-\bfv\|_{\V}+\|p-q\|_Q\right).
\]

From Remark~\ref{re:neumann} we can take $s>1/2$ and proceed as follows.
\[
\aligned
\|\bfu-\bfv\|_{\V}^2&=\|\eps^{1/2}(\bfu-\bfv)\|_0^2
+\|\mu^{-1/2}\rot(\bfu-\bfv)\|_0^2\\
&\le C\overline\eps h^{2s}\|\bfu\|_s^2+
\|\mu^{-1/2}\rot(\bfu-\bfv)\|_0^2.
\endaligned
\]
We assume that $\mu$ is piecewise regular and that the mesh is compatible in
the sense that $\mu$ is smooth in each element, so that $\rot\bfu$ belongs to
$H^s(K)$ for each $K\in\T_h$. Then the estimate reads
\[
\aligned
\|\bfu-\bfv\|_{\V}^2&\le C\overline\eps h^{2s}\|\bfu\|_s^2+
\sum_{K\in\T_h}\|\mu^{-1/2}\rot(\bfu-\bfv)\|_{0,K}^2\\
&\le C\overline\eps h^{2s}\|\bfu\|_s^2+ C
\underline\mu^{-1}\sum_{K\in\T_h}h_K^{2s}\|\rot\bfu\|_{s,K}^2\\
&\le C\overline\eps h^{2s}\|\bfu\|_s^2+ C(\eps,\mu) h^{2s}\|g\|_Q.
\endaligned
\]
For the variable $p$ standard approximation properties give
\[
\|p-q\|_Q\le C\underline\eps^{-1/2}h^s\|p\|_{1+s},
\]
so that the final estimate reads
\begin{equation}
\|(T-T_h)g\|_Q\le Ch^s\|g\|_Q.
\label{eq:sestimate}
\end{equation}

Hence, we can conclude the convergence of the eigensolutions (and the absence
of spurious modes). In the next theorem we state this result, together with
the optimal rate of convergence of the eigenmodes.

\begin{remark}
The definition of the finite element spaces~\eqref{eq:spaces} considers a
balanced choice of the polynomial degrees (see also~\eqref{eq:sestimate}). For
the well posedness of the discrete problem, however, there are no
compatibility conditions between the spaces and more general choices could be
made.
\end{remark}

\begin{theorem}
For $k\ge0$, let $\V_h$ and $Q_h$ be as in the above definitions, and assume
that $\lambda$ is an eigenvalue of~\eqref{eq:LSbfgpvar2Deig} of multiplicity
$m$ with associated eigenspace $E$. Then there exist exactly $m$ eigenvalues
$\lambda_{1,h}\le\cdots\le\lambda_{m,h}$ converging to $\lambda$. Moreover,
let us denote by $E_h$ the space spanned by eigenfunctions associated with the
$m$ discrete eigenvalues. Then
\[
\aligned
&|\lambda-\lambda_{i,h}|\le C\epsilon(h)^2&&(i=1,\dots,m)\\
&\hat\delta(E,E_h)\le C\epsilon(h),
\endaligned
\]
where
\[
\epsilon(h)=\sup_{\substack{p\in E\\\|p\|_1=1}}\|(T-T_h)p\|_Q
\]
and $\hat\delta(A,B)$ denotes the gap between the subspaces $A$ and $B$ of $Q$.

\end{theorem}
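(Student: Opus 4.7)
The plan is to apply the classical Babu\v{s}ka--Osborn spectral approximation theory for compact self-adjoint operators to the pair $T,T_h\colon Q\to Q$. Three ingredients are already in place in the paper: (i) $T$ is compact, which follows from the regularity shift recorded in Remark~\ref{re:neumann}; (ii) $T$ is self-adjoint with respect to $(\cdot,\cdot)_Q$, which is Proposition~\ref{pr:self-adjoint}; and (iii) the uniform estimate~\eqref{eq:sestimate} gives norm convergence $\|T-T_h\|_{\mathcal{L}(Q)}\le Ch^s\to 0$. Together, these put us in the exact setting of~\cite{bbg}, where norm convergence is shown to be necessary and sufficient for the spectral convergence of a sequence of compact self-adjoint operators, with no spurious modes.

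The first step is to identify the spectrum of $T$ with the eigenvalues of~\eqref{eq:LSbfgpvar2Deig}: by testing the source problem against $g=\lambda p$ one sees that $\lambda$ is an eigenvalue of~\eqref{eq:LSbfgpvar2Deig} with eigenfunction $p$ if and only if $Tp=\lambda^{-1}p$, so each eigenvalue $\lambda$ of the continuous problem with multiplicity $m$ corresponds to the eigenvalue $\mu=\lambda^{-1}$ of $T$ with the same multiplicity. The same identification at the discrete level holds for $T_h$, since $T_h g$ is precisely the second component of the solution of~\eqref{eq:LSbfgpvar2Dh}.

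Once the identification is in place, the norm convergence $\|T-T_h\|\to0$ together with the compactness of $T$ yields, by the standard spectral perturbation result, the existence of exactly $m$ discrete eigenvalues $\mu_{1,h},\dots,\mu_{m,h}$ of $T_h$ clustering at $\mu=\lambda^{-1}$. The Babu\v{s}ka--Osborn estimates for a self-adjoint operator then give
\[
\hat\delta(E,E_h)\le C\,\epsilon(h),\qquad
|\mu-\mu_{i,h}|\le C\,\epsilon(h)^{2},
\]
with $\epsilon(h)=\sup_{p\in E,\,\|p\|_1=1}\|(T-T_h)p\|_Q$; the doubling of the rate in the eigenvalue estimate is exactly the improvement furnished by self-adjointness. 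Inverting $\lambda=\mu^{-1}$ around the nonzero value $\mu$ preserves both the multiplicity and the asymptotic rate, yielding the stated bound $|\lambda-\lambda_{i,h}|\le C\epsilon(h)^{2}$.

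The only delicate point I anticipate is the \emph{exact-count} assertion (no spurious modes and no missing ones): one must exclude the possibility that the null space of the right-hand side operator in~\eqref{eq:operatorform} introduces additional discrete eigenvalues at finite values that could contaminate the cluster. This is handled by the operator identification above, which shows that the finite eigenvalues $\lambda_h$ are in one-to-one correspondence with the nonzero eigenvalues $\mu_h=\lambda_h^{-1}$ of $T_h$; the $\lambda=\infty$ solutions discussed after~\eqref{eq:operatorform} are precisely the elements of $\ker T_h$ and do not interfere with the clustering at any finite $\lambda$. With this caveat observed, the count and rates follow directly from~\cite{bbg} applied to $T$ and $T_h$.
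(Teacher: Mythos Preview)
Your proposal is correct and follows essentially the same approach as the paper: the paper does not give a standalone proof of this theorem but derives it from the preceding norm-convergence estimate~\eqref{eq:sestimate} together with the compactness and self-adjointness of $T$ established earlier, invoking the Babu\v{s}ka--Osborn theory (via~\cite{bbg}) exactly as you do. Your write-up is in fact more detailed than the paper's---in particular the explicit identification $Tp=\lambda^{-1}p$ and the discussion ruling out interference from the $\lambda=\infty$ solutions are spelled out by you but only implicit in the paper.
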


\begin{remark}
The eigenspace $E$ and the space $E_h$ refer to the components $p$ and $p_h$
of the solutions of~\eqref{eq:LSbfgpvar2Deig} and~\eqref{eq:LSbfgpvar2Deigh},
respectively. However, the variables we are interested in when discussing the
eigenproblem associated with Maxwell's equations, are $\bfu$ and $\bfu_h$. On
the other hand, problems~~\eqref{eq:LSbfgpvar2Deig}
and~\eqref{eq:LSbfgpvar2Deigh} provide us also with the other (unique)
component of the solution which satisfies the following approximation
property. Let us denote by $F$ the space spanned by $\bfu$ associated with
$\lambda$ in~\eqref{eq:LSbfgpvar2Deig} and by $F_h$ the discrete space spanned
by the corresponding $\{\bfu_{1,h},\dots,\bfu_{m,h}\}$ obtained
from~\eqref{eq:LSbfgpvar2Deigh}. Then we have
\[
\hat\delta_{\rot}(F,F_h)\le C\epsilon(h),
\]
where $\hat\delta_{\rot}(A,B)$ denotes the gap between subspaces $A$ and $B$
of $\Horot$.

We also observe that if $E\subset H^{r_1+1}$ and $F\subset\bfH^{r_2}(\rot)$
then $\epsilon(h)=O(h^t)$ with $t=\min(k+1,r_1,r_2)$.
\end{remark}

One of the main messages that are conveyed when discussing Least-Squares
finite element methods, is that, thanks to the coercivity of the formulation,
any choice of finite element spaces is admissible, without the need of
satisfying a compatibility condition.

Several authors have tried to approximate the eigensolutions of the resonant
cavity by using standard \emph{nodal} elements. With this in mind, we might
think of choosing as $\V_h$ a space of Lagrange nodal elements in each
component and as $Q_h$ a space of Lagrange nodal elements as well. For
instance, an equal order approximation would involve the following spaces:
\[
\aligned
&\V_h=\{\bfv\in\bfH^1(\Omega)\cap\V:\bfv|_K\in(\Po_k{(K)})^2\ \forall
K\in\T_h\}\\
&Q_h=\{q\in Q:q|_K\in\Po_k{(K)}\ \forall K\in\T_h\}.
\endaligned
\]

In this case, the analogous of~\eqref{eq:sestimate} can be derived from
Theorem~\ref{th:cea} and the approximation properties of the finite element
spaces as follows
\begin{equation}
\aligned
\|(T-T_h)g\|_Q&=\|p-p_h\|_Q\le C\inf_{(\bfv,q)\in\V_h\times Q_h}
\left(\|\bfu-\bfv\|_{\V}+\|p-q\|_Q\right)\\
&\le C\left(\inf_{\bfv\in\V_h}\|\bfu-\bfv\|_{\V}+h^s\|p\|_{1+s}\right).
\endaligned
\label{eq:sestimaten}
\end{equation}
The estimate of the first term in the right hand side of~\eqref{eq:sestimaten}
is not as immediate as in the case of edge elements. Indeed, in the case of
nodal element, the best approximation in $\Hrot$ is not in general better than
the best approximation in $\bfH^1(\Omega)$. It follows
that~\eqref{eq:sestimaten} gives
\[
\|(T-T_h)g\|_Q\le Ch^s(\|\bfu\|_{1+s}+\|p\|_{1+s}).
\]
On the other hand, since $\eps\bfu=\curl p$, we get
\[
\|(T-T_h)g\|_Q\le Ch^s\|p\|_{2+s}
\]
and we can obtain the convergence in norm~\eqref{eq:cunif} if
Problem~\eqref{eq:bfgp2D} satisfies the a priori bound
\[
\|p\|_{2+s}\le C\|g\|_Q.
\]
From Remark~\ref{re:neumann} we know that this regularity holds true only in
very particular circumstances. In particular, we can prove the optimal
convergence of the eigensolutions (and the absence of spurious modes) in the
case of $\eps$ and $\mu$ constant, and when $\Omega$ is a square. More general
configurations are controversial from the theoretical point of view. On the
other hand, our numerical simulations presented in the next section show that
the method is pretty robust also in presence of strongly singular solutions.

\section{Numerical examples in two dimensions}
\label{se:2Dres}

Analogously to what was observed in Section~\ref{se:operatorform}, the
algebraic system associated with the discrete eigenvalue problem has the form
\[
\begin{pmatrix}
\sfA&\sfB^\top\\
\sfB&\sfC
\end{pmatrix}
\begin{pmatrix}
\sfx\\
\sfy
\end{pmatrix}
=\lambda
\begin{pmatrix}
\sfzero&\sfD\\
\sfzero&\sfzero
\end{pmatrix}
\begin{pmatrix}
\sfx\\
\sfy
\end{pmatrix},
\]
where the blocks of the matrices correspond to the pieces
in~\eqref{eq:LSbfgpvar2Deigh} according to the following mapping
\[
\aligned
&\sfA:&&(\eps\bfu_h,\bfv_h)+(\mu^{-1}\rot\bfu_h,\rot\bfv_h)\\
&\sfB:&&-(\bfu_h,\curl q_h)\\
&\sfC:&&(\eps^{-1}\curl p_h,\curl q_h)\\
&\sfD:&&(p_h,\rot\bfv_h).
\endaligned
\]

It turns out that this is a degenerate generalized eigenvalue problem. It is
out of the scope of this paper to discuss the best strategy for its
resolution. In our FEniCS code~\cite{dolfin,Fenics} we call the SLEPc
solver~\cite{slepc-users-manual}. Our options include the use of
mumps~\cite{Mumps} for dealing with the singular matrix on the right hand side
and shift and invert with shift equal to zero to compute the smallest
eigenvalues.

A crucial comment, which will become essential for the three dimensional
extension of our tests, refers to the gauge condition of the variables $p$ and
$q$. According to the variational formulation~\eqref{eq:LSbfgpvar2Deigh}, we
should impose on $p$ and $q$ the zero mean value condition after we multiply
them by $\mu$. This can be easily performed by adding a Lagrange multiplier at
the expense of increasing by one the dimension of the system.
The full variational formulation would read: find $\lambda_h\in\RE$ and
$p_h\in\widetilde Q_h$, with $p_h\ne0$, such that for some $\bfu_h\in\V_h$
and $\phi_h\in\RE$
\begin{equation}
\left\{
\aligned
&(\eps\bfu_h,\bfv)+(\mu^{-1}\rot\bfu_h,\rot\bfv)-(\bfv,\curl p_h)=
\lambda_h(p_h,\rot\bfv)&&\forall\bfv\in\V_h\\
&-(\bfu_h,\curl q)+(\eps^{-1}\curl p_h,\curl q)+(\phi_h,\mu q)=0&&\forall
q\in\widetilde Q_h\\
&(\mu p_h,\psi)=0&&\forall\psi\in\RE,
\endaligned
\right.
\label{eq:LSbfgpvar2Deigh-nomean}
\end{equation}
where $\widetilde Q_h$ is the same finite element space as
in~\eqref{eq:LSbfgpvar2Deigh} without imposing the zero mean value condition.

Clearly, problems~\eqref{eq:LSbfgpvar2Deigh}
and~\eqref{eq:LSbfgpvar2Deigh-nomean} are equivalent.

\begin{remark}

Similarly to what we observed at the end of Section~\ref{se:operatorform}, if
we solve the unconstrained problem~\eqref{eq:LSbfgpvar2Deigh} with $Q_h$
replaced by $\widetilde Q_h$ instead of~\eqref{eq:LSbfgpvar2Deigh-nomean},
the only difference is the introduction of an additional singular pencil
corresponding to the equation $\sfzero=\lambda\sfzero$ with $\bfu_h=\bfzero$
and $p$ constant.

\end{remark}

The first test that we present is a sanity check on the square $(0,\pi)^2$ and
$\mu=\eps=1$. It is well known that the eigenvalues are $m^2+n^2$ for
$m,n\ge0$ and $m+n>0$. Table~\ref{tb:edgesquare} confirms the optimal
convergence of the edge element approximation. Since the theoretical results
clearly indicate that edge elements are optimally convergent, we focus now on
a deeper investigation when nodal elements are used.

\begin{table}
\caption{ Edge elements on a uniform mesh of the square }
\begin{tabular}{r|rrr}
\hline
Exact&\multicolumn{3}{c}{Computed (rate)}\\[3pt]
\hline
1.00000 & 1.01090 & 1.00273 (2.00) & 1.00068 (2.00) \\
1.00000 & 1.01268 & 1.00316 (2.00) & 1.00079 (2.00) \\
2.00000 & 2.04063 & 2.01017 (2.00) & 2.00254 (2.00) \\
4.00000 & 4.11271 & 4.02792 (2.01) & 4.00696 (2.00) \\
4.00000 & 4.11276 & 4.02793 (2.01) & 4.00697 (2.00) \\
5.00000 & 5.14696 & 5.03683 (2.00) & 5.00922 (2.00) \\
5.00000 & 5.23986 & 5.05920 (2.02) & 5.01476 (2.00) \\
8.00000 & 8.49059 & 8.12382 (1.99) & 8.03103 (2.00) \\
9.00000 & 9.49795 & 9.12178 (2.03) & 9.03028 (2.01) \\
9.00000 & 9.51605 & 9.12582 (2.04) & 9.03127 (2.01) \\
\hline
Mesh & 1/16 & 1/32 & 1/64 \\
\hline
\end{tabular}
\label{tb:edgesquare}
\end{table}

In Tables~\ref{tb:nodalsquare} and~\ref{tb:nodalsquarenu} we perform the same
test with nodal (continuous piecewise linear) elements on a structured and
nonstructured mesh, respectively. It turns out that also in this case the
convergence is optimal and there are no spurious modes.

\begin{table}
\caption{ Nodal elements on a uniform mesh of the square }
\begin{tabular}{r|rrr}
\hline
Exact&\multicolumn{3}{c}{Computed (rate)}\\[3pt]
\hline
1.00000 & 1.00961 & 1.00240 (2.00) & 1.00060 (2.00) \\
1.00000 & 1.00963 & 1.00241 (2.00) & 1.00060 (2.00) \\
2.00000 & 2.03841 & 2.00962 (2.00) & 2.00241 (2.00) \\
4.00000 & 4.11637 & 4.02886 (2.01) & 4.00721 (2.00) \\
4.00000 & 4.11642 & 4.02886 (2.01) & 4.00721 (2.00) \\
5.00000 & 5.15263 & 5.03849 (1.99) & 5.00966 (1.99) \\
5.00000 & 5.23639 & 5.05860 (2.01) & 5.01464 (2.00) \\
8.00000 & 8.49203 & 8.12453 (1.98) & 8.03125 (1.99) \\
9.00000 & 9.56410 & 9.13748 (2.04) & 9.03424 (2.01) \\
9.00000 & 9.56410 & 9.13752 (2.04) & 9.03425 (2.01) \\
\hline
Mesh & 1/16 & 1/32 & 1/64 \\
\hline
\end{tabular}
\label{tb:nodalsquare}
\end{table}

\begin{table}
\caption{ Nodal elements on a nonuniform mesh of the square }
\begin{tabular}{r|rrr}
\hline
Exact&\multicolumn{3}{c}{Computed (rate)}\\[3pt]
\hline
1.00000 & 1.00478 & 1.00118 (2.01) & 1.00029 (2.01) \\
1.00000 & 1.00490 & 1.00120 (2.03) & 1.00030 (2.02) \\
2.00000 & 2.01564 & 2.00385 (2.02) & 2.00095 (2.01) \\
4.00000 & 4.05564 & 4.01362 (2.03) & 4.00336 (2.02) \\
4.00000 & 4.05739 & 4.01389 (2.05) & 4.00341 (2.03) \\
5.00000 & 5.08616 & 5.02078 (2.05) & 5.00512 (2.02) \\
5.00000 & 5.08816 & 5.02096 (2.07) & 5.00515 (2.03) \\
8.00000 & 8.21196 & 8.05101 (2.05) & 8.01255 (2.02) \\
9.00000 & 9.26651 & 9.06385 (2.06) & 9.01583 (2.01) \\
9.00000 & 9.27064 & 9.06476 (2.06) & 9.01591 (2.02) \\
\hline
Mesh & 1/16 & 1/32 & 1/64 \\
\hline
\end{tabular}
\label{tb:nodalsquarenu}
\end{table}

From~\eqref{eq:sestimaten} it was clear that the regularity of the solution is
essential in order to prove theoretically the convergence of the method. It is
then essential to verify the approximation behavior in presence of singular
solutions.
We start with the L-shaped domain obtained by the removing the upper right
square $(0,1)^2$ from the square $(-1,1)^2$, where we consider the reference
solution presented in~\cite{benchmax}.
Tables~\ref{tb:nodalL} and~\ref{tb:nodalLnu} show that also in this case the
nodal element approximation of our problem performs optimally and that no
spurious modes are present. It can be appreciated that singular solutions
present the expected lower rate of convergence.

\begin{table}
\caption{ Nodal elements on a uniform mesh of the L-shaped domain }
\label{tb:nodalL}
\begin{tabular}{r|rrr}
\hline
Exact&\multicolumn{3}{c}{Computed (rate)}\\[3pt]
\hline
1.47562 & 1.60421 & 1.52532 (1.37) & 1.49509 (1.35) \\
3.53403 & 3.56787 & 3.54233 (2.03) & 3.53609 (2.01) \\
9.86960 & 10.07466 & 9.92010 (2.02) & 9.88218 (2.01) \\
9.86960 & 10.07466 & 9.92010 (2.02) & 9.88218 (2.01) \\
11.38948 & 11.70401 & 11.46698 (2.02) & 11.40879 (2.00) \\
\hline
Mesh & 1/16 & 1/32 & 1/64 \\
\hline
\end{tabular}
\end{table}

\begin{table}
\caption{ Nodal elements on a nonuniform mesh of the L-shaped domain }
\label{tb:nodalLnu}
\begin{tabular}{r|rrrrr}
\hline
Exact&\multicolumn{5}{c}{Computed (rate)}\\[3pt]
\hline
1.47562 & 1.61215 & 1.53721 (1.15) & 1.51774 (0.55) & 1.49583 (1.06) & 1.48561 (1.02) \\
3.53403 & 3.55865 & 3.54038 (1.96) & 3.53568 (1.95) & 3.53440 (2.18) & 3.53414 (1.76) \\
9.86960 & 9.99824 & 9.90032 (2.07) & 9.87722 (2.01) & 9.87151 (2.00) & 9.87008 (2.01) \\
9.86960 & 10.00032 & 9.90054 (2.08) & 9.87739 (1.99) & 9.87152 (2.03) & 9.87008 (2.01) \\
11.38948 & 11.56604 & 11.43105 (2.09) & 11.39996 (1.99) & 11.39204 (2.03) & 11.39012 (1.99) \\
\hline
Mesh & 1/16 & 1/32 & 1/64 & 1/128 & 1/256 \\
\hline
\end{tabular}
\end{table}

We continue our investigations by considering more and more challenging
problems. The next one is the so called slit domain, that is the square
$(-1,1)^2$ from which we remove the segment $(0,1)\times\{0\}$. This is the
so called \emph{cracked domain} in~\cite{benchmax} that we use as a reference
solution. Also in this case, nodal elements behave optimally, in agreement
with the regularity of the solution, see Table~\ref{tb:nodalslit}.

\begin{table}
\caption{ Nodal elements on a uniform mesh of the slit domain }
\label{tb:nodalslit}
\begin{tabular}{r|rrr}
\hline
Exact&\multicolumn{3}{c}{Computed (rate)}\\[3pt]
\hline
1.03407 & 1.36608 & 1.19389 (1.05) & 1.11254 (1.03) \\
2.46740 & 2.48205 & 2.47105 (2.01) & 2.46831 (2.00) \\
4.04693 & 4.09184 & 4.05806 (2.01) & 4.04970 (2.00) \\
9.86960 & 10.07466 & 9.92010 (2.02) & 9.88218 (2.01) \\
9.86960 & 10.07466 & 9.92010 (2.02) & 9.88218 (2.01) \\
10.84485 & 11.11915 & 10.91246 (2.02) & 10.86170 (2.00) \\
12.26490 & 12.72449 & 12.43232 (1.46) & 12.36074 (0.80) \\
12.33701 & 12.96253 & 12.51124 (1.84) & 12.36438 (2.67) \\
19.73921 & 20.82226 & 20.00356 (2.03) & 19.80490 (2.01) \\
21.24411 & 22.81415 & 21.74694 (1.64) & 21.43012 (1.43) \\
\hline
Mesh & 1/16 & 1/32 & 1/64 \\
\hline
\end{tabular}
\end{table}

In order to make the solution even more singular, we consider the same slit
domain with mixed boundary conditions, that is we modify the boundary
conditions in the definition of the spaces $\V$ and $Q$ as follows: functions
in $\V$ have zero tangential component along the exterior boundary, while
functions in $Q$ are vanishing on the slit.

Since we do not have a reference solution in this case, we compare our results
with the eigenvalues computed with a standard Galerkin approximation of the
Laplace eigenvalue problem with mixed boundary conditions (Dirichlet on the
slit and Neumann on the rest of the boundary). Table~\ref{tb:nodalslitmixed}
shows that also in this case our solution is convergent and that no spurious
modes are present.

\begin{table}
\caption{ Nodal elements on a uniform mesh of the slit domain with mixed boundary conditions }
\label{tb:nodalslitmixed}
\begin{tabular}{r|rrr}
\hline
Rank&\multicolumn{3}{c}{Computed with standard Galerkin}\\[3pt]
\hline
1 & 1.27238 & 1.14957 & 1.09097  \\
2 & 2.48205 & 2.47105 & 2.46831  \\
3 & 4.09155 & 4.05803 & 4.04970  \\
4 & 5.00737 & 4.95283 & 4.93930  \\
5 & 11.11902 & 10.91244 & 10.86170  \\
6 & 12.72449 & 12.43232 & 12.35737  \\
7 & 12.93065 & 12.49669 & 12.36074  \\
8 & 22.78151 & 21.73189 & 21.42284  \\
9 & 23.23963 & 22.45620 & 22.26849  \\
10 & 25.15087 & 24.18459 & 23.95261  \\
\hline
Rank&\multicolumn{3}{c}{Computed with Least-Squares}\\[3pt]
\hline
1 & 1.09561 & 1.06413 & 1.04894  \\
2 & 2.47402 & 2.46905 & 2.46781  \\
3 & 4.07072 & 4.05292 & 4.04843  \\
4 & 4.97718 & 4.94538 & 4.93744  \\
5 & 10.99818 & 10.88309 & 10.85441  \\
6 & 12.56704 & 12.39424 & 12.33447  \\
7 & 12.71164 & 12.43105 & 12.35130  \\
8 & 22.25050 & 21.59440 & 21.38219  \\
9 & 22.75004 & 22.34091 & 22.24009  \\
10 & 24.57890 & 24.05065 & 23.91967  \\
\hline
Rank&\multicolumn{3}{c}{Difference (rate)}\\[3pt]
\hline
1 & 0.17677 & 0.08544 (1.05) & 0.04203 (1.02) \\
2 & 0.00803 & 0.00199 (2.01) & 0.00050 (2.00) \\
3 & 0.02084 & 0.00511 (2.03) & 0.00127 (2.01) \\
4 & 0.03019 & 0.00746 (2.02) & 0.00186 (2.00) \\
5 & 0.12084 & 0.02935 (2.04) & 0.00728 (2.01) \\
6 & 0.15745 & 0.03808 (2.05) & 0.02289 (0.73) \\
7 & 0.21901 & 0.06564 (1.74) & 0.00944 (2.80) \\
8 & 0.53101 & 0.13749 (1.95) & 0.04065 (1.76) \\
9 & 0.48960 & 0.11529 (2.09) & 0.02840 (2.02) \\
10 & 0.57197 & 0.13394 (2.09) & 0.03294 (2.02) \\
\hline
Mesh & 1/16 & 1/32 & 1/64 \\
\hline
\end{tabular}
\end{table}

Finally, we challenge our code with the computation of the problem with
different materials, that is jumping coefficients $\eps$ and $\mu$. In this
case, we compare with the standard \emph{curl curl} formulation discretized by
edge elements. We take a uniform mesh compatible with the jump of the
material. The test cases consider the square $(0,\pi)^2$ containing a
different material in the bottom-left square $(0,\pi/2)^2$.
Table~\ref{tb:epsilon} shows the comparison of the results when $\eps$ is
equal to $100$ on the different material and $1$ otherwise, with $\mu=1$
everywhere, while Table~\ref{tb:mu} deals with the case when $\eps=0$
everywhere and $\mu=1/100$ on the different material and $1$ otherwise. Also
in this case our method performs quite well.

\begin{table}
\caption{ Comparison of curl curl formulation (edge elements) and our formulation (nodal elements) on a uniform and fitted mesh of the square with jumping $\varepsilon$ }
\begin{tabular}{r|rrr}
\hline
Rank&\multicolumn{3}{c}{Computed with standard Galerkin}\\[3pt]
\hline
1 & 0.01294 & 0.01295 & 0.01295  \\
2 & 0.01425 & 0.01425 & 0.01425  \\
3 & 0.02579 & 0.02579 & 0.02579  \\
4 & 0.04612 & 0.04613 & 0.04613  \\
5 & 0.05126 & 0.05128 & 0.05129  \\
6 & 0.09252 & 0.09261 & 0.09264  \\
7 & 0.09407 & 0.09412 & 0.09413  \\
8 & 0.09971 & 0.09973 & 0.09973  \\
9 & 0.10740 & 0.10744 & 0.10746  \\
10 & 0.11554 & 0.11556 & 0.11557  \\
\hline
Rank&\multicolumn{3}{c}{Computed with Least-Squares}\\[3pt]
\hline
1 & 0.01483 & 0.01444 & 0.01391  \\
2 & 0.01602 & 0.01464 & 0.01433  \\
3 & 0.02677 & 0.02616 & 0.02596  \\
4 & 0.04732 & 0.04645 & 0.04622  \\
5 & 0.05505 & 0.05286 & 0.05205  \\
6 & 0.09655 & 0.09458 & 0.09359  \\
7 & 0.09726 & 0.09480 & 0.09433  \\
8 & 0.10286 & 0.10054 & 0.09995  \\
9 & 0.11148 & 0.10893 & 0.10808  \\
10 & 0.12236 & 0.11778 & 0.11642  \\
\hline
Rank&\multicolumn{3}{c}{Difference (rate)}\\[3pt]
\hline
1 & 0.00189 & 0.00149 (0.34) & 0.00096 (0.64) \\
2 & 0.00177 & 0.00039 (2.19) & 0.00007 (2.46) \\
3 & 0.00098 & 0.00037 (1.40) & 0.00016 (1.18) \\
4 & 0.00120 & 0.00031 (1.93) & 0.00009 (1.87) \\
5 & 0.00379 & 0.00158 (1.26) & 0.00075 (1.07) \\
6 & 0.00403 & 0.00197 (1.04) & 0.00094 (1.06) \\
7 & 0.00319 & 0.00068 (2.24) & 0.00020 (1.77) \\
8 & 0.00315 & 0.00082 (1.94) & 0.00022 (1.90) \\
9 & 0.00408 & 0.00149 (1.45) & 0.00062 (1.27) \\
10 & 0.00682 & 0.00222 (1.62) & 0.00085 (1.39) \\
\hline
Mesh & 1/64 & 1/128 & 1/256 \\
\hline
\end{tabular}
\label{tb:epsilon}
\end{table}

\begin{table}
\caption{ Comparison of curl curl formulation (edge elements) and our formulation (nodal elements) on a uniform and fitted mesh of the square with jumping $\mu$ }
\begin{tabular}{r|rrr}
\hline
Rank&\multicolumn{3}{c}{Computed with standard Galerkin}\\[3pt]
\hline
1 & 4.44469 & 4.44455 & 4.44451  \\
2 & 5.32703 & 5.32940 & 5.33000  \\
3 & 11.85091 & 11.85115 & 11.85121  \\
4 & 16.81898 & 16.83221 & 16.83553  \\
5 & 17.54526 & 17.56033 & 17.56410  \\
6 & 24.83957 & 24.82828 & 24.82545  \\
7 & 25.87595 & 25.90293 & 25.90970  \\
8 & 36.72957 & 36.78732 & 36.80175  \\
9 & 37.55359 & 37.62456 & 37.64233  \\
10 & 39.98800 & 39.97241 & 39.96839  \\
\hline
Rank&\multicolumn{3}{c}{Computed with Least-Squares}\\[3pt]
\hline
1 & 4.49865 & 4.47280 & 4.45740  \\
2 & 5.43938 & 5.37976 & 5.35001  \\
3 & 12.19870 & 12.03015 & 11.93174  \\
4 & 17.01187 & 16.90259 & 16.86498  \\
5 & 17.73776 & 17.62300 & 17.58675  \\
6 & 25.38670 & 25.04539 & 24.91860  \\
7 & 26.46349 & 26.16889 & 26.03923  \\
8 & 37.38132 & 36.96354 & 36.85391  \\
9 & 38.25990 & 37.81014 & 37.69398  \\
10 & 41.12399 & 40.40199 & 40.15998  \\
\hline
Rank&\multicolumn{3}{c}{Difference (rate)}\\[3pt]
\hline
1 & 0.05397 & 0.02826 (0.93) & 0.01288 (1.13) \\
2 & 0.11235 & 0.05035 (1.16) & 0.02001 (1.33) \\
3 & 0.34779 & 0.17900 (0.96) & 0.08053 (1.15) \\
4 & 0.19290 & 0.07038 (1.45) & 0.02946 (1.26) \\
5 & 0.19249 & 0.06267 (1.62) & 0.02265 (1.47) \\
6 & 0.54712 & 0.21712 (1.33) & 0.09315 (1.22) \\
7 & 0.58754 & 0.26595 (1.14) & 0.12953 (1.04) \\
8 & 0.65176 & 0.17622 (1.89) & 0.05216 (1.76) \\
9 & 0.70631 & 0.18558 (1.93) & 0.05164 (1.85) \\
10 & 1.13599 & 0.42958 (1.40) & 0.19159 (1.16) \\
\hline
Mesh & 1/64 & 1/128 & 1/256 \\
\hline
\end{tabular}
\label{tb:mu}
\end{table}

We conclude this section by showing some more numerical results which are not
completely covered by the theory. Namely we consider the situation when the
jump in the coefficient is not aligned with the mesh. In order to do so, we
consider the same geometry as in the previous examples reported in
Tables~\ref{tb:epsilon} and~\ref{tb:mu} with a nonuniform mesh of the domain
$\Omega$. It turns out that in this case the background mesh is not fitted
with the material discontinuities. The results are pretty much convincing also
in this case, as it can be seen from Table~\ref{tb:epsilonnu} in the case of
jumping $\eps$ and in Table~\ref{tb:munu} when $\mu$ jumps.

\begin{table}
\caption{ Comparison of curl curl formulation (edge elements) and our formulation (nodal elements) on a nonuniform and unfitted mesh of the square with jumping $\varepsilon$ }
\label{tb:epsilonnu}
\begin{tabular}{r|rrr}
\hline
Rank&\multicolumn{3}{c}{Computed with standard Galerkin}\\[3pt]
\hline
1 & 0.01306 & 0.01299 & 0.01298  \\
2 & 0.01450 & 0.01437 & 0.01431  \\
3 & 0.02607 & 0.02592 & 0.02585  \\
4 & 0.04625 & 0.04619 & 0.04616  \\
5 & 0.05163 & 0.05141 & 0.05138  \\
6 & 0.09324 & 0.09286 & 0.09282  \\
7 & 0.09584 & 0.09496 & 0.09455  \\
8 & 0.10070 & 0.10018 & 0.09993  \\
9 & 0.10921 & 0.10818 & 0.10786  \\
10 & 0.11742 & 0.11644 & 0.11603  \\
\hline
Rank&\multicolumn{3}{c}{Computed with Least-Squares}\\[3pt]
\hline
1 & 0.01566 & 0.01457 & 0.01407  \\
2 & 0.01876 & 0.01632 & 0.01528  \\
3 & 0.02909 & 0.02729 & 0.02653  \\
4 & 0.04754 & 0.04666 & 0.04635  \\
5 & 0.05450 & 0.05285 & 0.05226  \\
6 & 0.09631 & 0.09445 & 0.09377  \\
7 & 0.09978 & 0.09701 & 0.09569  \\
8 & 0.10723 & 0.10225 & 0.10066  \\
9 & 0.11593 & 0.11089 & 0.10925  \\
10 & 0.12434 & 0.11951 & 0.11753  \\
\hline
Rank&\multicolumn{3}{c}{Difference (rate)}\\[3pt]
\hline
1 & 0.00259 & 0.00158 (0.71) & 0.00108 (0.54) \\
2 & 0.00426 & 0.00195 (1.13) & 0.00097 (1.00) \\
3 & 0.00302 & 0.00137 (1.14) & 0.00068 (1.02) \\
4 & 0.00130 & 0.00047 (1.47) & 0.00019 (1.27) \\
5 & 0.00288 & 0.00144 (1.00) & 0.00088 (0.71) \\
6 & 0.00307 & 0.00159 (0.95) & 0.00095 (0.74) \\
7 & 0.00394 & 0.00205 (0.94) & 0.00114 (0.84) \\
8 & 0.00652 & 0.00207 (1.65) & 0.00073 (1.50) \\
9 & 0.00672 & 0.00271 (1.31) & 0.00139 (0.96) \\
10 & 0.00691 & 0.00307 (1.17) & 0.00150 (1.03) \\
\hline
Mesh & 1/64 & 1/128 & 1/256 \\
\hline
\end{tabular}
\end{table}

\begin{table}
\caption{ Comparison of curl curl formulation (edge elements) and our formulation (nodal elements) on a nonuniform and unfitted mesh of the square with jumping $\mu$ }
\label{tb:munu}
\begin{tabular}{r|rrr}
\hline
Rank&\multicolumn{3}{c}{Computed with standard Galerkin}\\[3pt]
\hline
1 & 4.33537 & 4.39753 & 4.41930  \\
2 & 5.19513 & 5.26094 & 5.29734  \\
3 & 11.57250 & 11.73322 & 11.78452  \\
4 & 16.44114 & 16.64371 & 16.73990  \\
5 & 17.10908 & 17.34339 & 17.46013  \\
6 & 24.25033 & 24.57767 & 24.68472  \\
7 & 25.25419 & 25.63192 & 25.76661  \\
8 & 35.92482 & 36.36569 & 36.59130  \\
9 & 36.70768 & 37.20499 & 37.42931  \\
10 & 38.93117 & 39.50523 & 39.73828  \\
\hline
Rank&\multicolumn{3}{c}{Computed with Least-Squares}\\[3pt]
\hline
1 & 4.34099 & 4.39904 & 4.41972  \\
2 & 5.20261 & 5.26322 & 5.29785  \\
3 & 11.60469 & 11.74193 & 11.78668  \\
4 & 16.49871 & 16.65967 & 16.74373  \\
5 & 17.17074 & 17.35981 & 17.46425  \\
6 & 24.37575 & 24.60994 & 24.69306  \\
7 & 25.38645 & 25.66805 & 25.77518  \\
8 & 36.18030 & 36.43259 & 36.60809  \\
9 & 36.97634 & 37.27509 & 37.44661  \\
10 & 39.23970 & 39.58511 & 39.75854  \\
\hline
Rank&\multicolumn{3}{c}{Difference (rate)}\\[3pt]
\hline
1 & 0.00562 & 0.00151 (1.90) & 0.00041 (1.87) \\
2 & 0.00749 & 0.00228 (1.71) & 0.00051 (2.15) \\
3 & 0.03219 & 0.00872 (1.88) & 0.00217 (2.01) \\
4 & 0.05757 & 0.01596 (1.85) & 0.00383 (2.06) \\
5 & 0.06166 & 0.01643 (1.91) & 0.00412 (1.99) \\
6 & 0.12542 & 0.03227 (1.96) & 0.00834 (1.95) \\
7 & 0.13227 & 0.03613 (1.87) & 0.00856 (2.08) \\
8 & 0.25548 & 0.06690 (1.93) & 0.01679 (1.99) \\
9 & 0.26866 & 0.07009 (1.94) & 0.01730 (2.02) \\
10 & 0.30854 & 0.07987 (1.95) & 0.02026 (1.98) \\
\hline
Mesh & 1/64 & 1/128 & 1/256 \\
\hline
\end{tabular}
\end{table}

\section{The three dimensional case}
\label{se:3D}

We start by writing a possible version of the three dimensional variational
problem associated with~\eqref{eq:LSbfgp} in the case when the uniqueness of
$\bfp$ is enforced by~\eqref{eq:gauge3D}.
Let $\V=\Hocurl$ and $\Qo=\Hcurl\cap\Hodivom$. The continuous problem reads:
given $\bfg\in\Hodivom$, find $(\bfu,\bfp)\in\V\times\Qo$ such that
\begin{equation}
\left\{
\aligned
&(\eps\bfu,\bfv)+(\mu^{-1}\curl\bfu,\curl\bfv)-(\bfv,\curl\bfp)=
(\bfg,\curl\bfv)&&\forall\bfv\in\V\\
&-(\bfu,\curl\bfq)+(\eps^{-1}\curl\bfp,\curl\bfq)=0&&\forall\bfq\in\Qo.
\endaligned
\right.
\label{eq:LSbfgpvar3D}
\end{equation}

The next proposition states the ellipticity of the bilinear form associated
with the above problem. We observe in particular that the norm of $\Qo$ is the
same as the $\Hcurl$ norm since the space contains divergence free
vectorfields.

\begin{proposition}

Let
\[
a:\left(\V\times\Qo\right)\times\left(\V\times\Qo\right)\to\RE
\]
be the bilinear form associated with the
formulation~\eqref{eq:LSbfgpvar3D}, that is
\[
a(\bfu,\bfp;\bfv,\bfq)=(\mu^{-1}\curl\bfu,\curl\bfv)
+(\eps^{1/2}\bfu-\eps^{-1/2}\curl\bfp,\eps^{1/2}\bfv-\eps^{-1/2}\curl\bfq).
\]
Then there exists $\alpha>0$ such that
\[
a(\bfv,\bfq;\bfv,\bfq)\ge\alpha\left(\|\bfv\|^2_{\curl}+\|\bfq\|_{\curl}^2\right).
\]
\label{pr:ell3D}

\end{proposition}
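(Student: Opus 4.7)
The plan is to adapt the two-dimensional argument of Proposition~\ref{pr:ell2D} essentially verbatim, replacing the Poincar\'e inequality on $Q$ by an analogous Friedrichs-type estimate on $\Qo$. Specifically, since $\Omega$ is contractible, the space $\Qo = \Hcurl \cap \Hodivom$ contains no nontrivial $\curl$-free element, and a standard compactness argument (the Weck/Picard embedding $\Hcurl \cap \Hodivom \hookrightarrow L^2(\Omega)$ being compact) delivers
\[
\|\bfq\|_0 \le C_F \|\curl\bfq\|_0 \qquad \forall \bfq \in \Qo.
\]
Once this inequality is available, the whole machinery of the 2D proof transfers with only cosmetic changes.

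First I would discard the nonnegative squared term in $a(\bfv,\bfq;\bfv,\bfq)$ to extract $\|\mu^{-1/2}\curl\bfv\|_0^2 \le a(\bfv,\bfq;\bfv,\bfq)$, and hence control $\|\curl\bfv\|_0^2$ via the bounds~\eqref{eq:epsmu}. Next, introducing a free parameter $\beta > 0$, I would add and subtract the cross terms $\pm 2\beta(\bfv,\curl\bfq)$, $\pm 2\beta(\mu^{-1/2}\curl\bfv,\mu^{1/2}\bfq)$, and $\pm\beta^2(\mu\bfq,\bfq)$, then complete squares exactly as in the 2D computation. After dropping the resulting nonnegative squares this gives
\[
a(\bfv,\bfq;\bfv,\bfq) \ge (2\beta - \beta^2)\|\eps^{-1/2}\curl\bfq\|_0^2 - \beta^2\|\mu^{1/2}\bfq\|_0^2.
\]
Invoking the Friedrichs inequality above together with~\eqref{eq:epsmu}, the right-hand side dominates a positive multiple of $\|\curl\bfq\|_0^2$ provided $\beta$ is chosen sufficiently small; a second application of the Friedrichs inequality upgrades this to a bound on the full $\Hcurl$-norm of $\bfq$.

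Finally, the $L^2$-bound on $\bfv$ is recovered via the triangle inequality
\[
\|\eps^{1/2}\bfv\|_0 \le \|\eps^{1/2}\bfv - \eps^{-1/2}\curl\bfq\|_0 + \|\eps^{-1/2}\curl\bfq\|_0,
\]
squaring and using the two previously obtained bounds, exactly as in the 2D proof. Collecting everything with a suitably small constant $\alpha > 0$ (depending only on $\underline\eps$, $\overline\eps$, $\underline\mu$, $\overline\mu$ and $C_F$) yields the claim.

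The only non-routine step is the Friedrichs inequality on $\Qo$: in two dimensions the $\mu$-weighted zero-mean condition on $Q$ reduced it to the standard Poincar\'e inequality, whereas in three dimensions the divergence-free and zero normal-trace conditions built into $\Hodivom$ take over, and contractibility of $\Omega$ is the essential geometric hypothesis ruling out nontrivial harmonic vector fields. This is the main obstacle; once it is taken as granted from the Maxwell-spaces literature, the rest of the argument is purely algebraic and mirrors the 2D proof line by line.
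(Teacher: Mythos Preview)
Your proposal is correct and follows essentially the same route as the paper's own proof: the paper simply states that the argument is the three-dimensional extension of Proposition~\ref{pr:ell2D}, with the single nontrivial point being the Friedrichs inequality $\|\bfq\|_0\le C_F\|\curl\bfq\|_0$ for $\bfq\in\Qo$ (citing~\cite[Cor.~3.16]{amrouche}), which is precisely the ingredient you identify and use.
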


\begin{proof}

The proof is obtaining by extending the two dimensional proof of
Proposition~\ref{pr:ell2D}. The only critical point is the estimate of
$\|\bfq\|_0$ in terms of $\|\curl\bfq\|_0$ which can be performed thanks to
the following Friedrichs inequality (see, for instance~\cite[Cor.\
3.16]{amrouche}): there exists a constant $C_F$ such that
\[
\|\bfq\|_0\le C_F\|\curl\bfq\|_0\qquad\forall\bfq\in\Qo.
\]

\end{proof}

From the ellipticity we deduce from Lax--Milgram lemma that
problem~\eqref{eq:LSbfgpvar3D} admits a unique solution with the stability
estimate
\[
\|\bfu\|_{\curl}+\|\bfp\|_{\curl}\le C\|\bfg\|_0.
\]

As already observed, dealing with the approximation of $\Qo$ is not easy, so
that we propose the following formulation which imposes the divergence free
condition with an additional Lagrange multiplier $\phi\in\Hu\cap\Ldo$.

Let $\V=\Hocurl$, $\Q=\Hcurl$, and $\W=\Hu\cap\Ldo$. We consider the following
problem: given $\bfg\in\Hodivom$, find $(\bfu,\bfp,\phi)\in\V\times\Q\times\W$
such that
\begin{equation}
\left\{
\aligned
&(\eps\bfu,\bfv)+(\mu^{-1}\curl\bfu,\curl\bfv)-(\bfv,\curl\bfp)=
(\bfg,\curl\bfv)&&\forall\bfv\in\V\\
&-(\bfu,\curl\bfq)+(\eps^{-1}\curl\bfp,\curl\bfq)+(\mu\bfq,\grad\phi)=0&&\forall\bfq\in\Q\\
&(\mu\bfp,\grad\psi)=0&&\forall\psi\in\W.
\endaligned
\right.
\label{eq:LSbfgpvarmul3D}
\end{equation}

The following proposition states the equivalence of
Problems~\eqref{eq:LSbfgpvarmul3D} and~\eqref{eq:LSbfgpvar3D}.

\begin{proposition}

Let $(\bfu,\bfp)$ be a solution of~\eqref{eq:LSbfgpvar3D}, then
$(\bfu,\bfp,\phi)$ solves~\eqref{eq:LSbfgpvarmul3D} with $\phi=0$.
Conversely, if $(\bfu,\bfp,\phi)$ solves~\eqref{eq:LSbfgpvarmul3D}, then
$\bfp$ is in $\Qo$ and $(\bfu,\bfp)$ is a solution of~\eqref{eq:LSbfgpvar3D}.
\label{pr:kernel}
\end{proposition}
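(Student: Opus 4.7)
The strategy is to handle the two implications separately, with the key tool being a $\mu$-weighted Helmholtz decomposition of $\Q$ over $\Qo$ and $\grad W^*$, where $W^* = \Hu$ (so that $\grad W^*$ is orthogonal to $\Qo$ in the $\mu$-weighted $\bfL^2$ inner product by definition of $\Hodivom$).

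For the forward implication, I would start from a solution $(\bfu,\bfp)\in\V\times\Qo$ of~\eqref{eq:LSbfgpvar3D} and set $\phi=0$. The first equation of~\eqref{eq:LSbfgpvarmul3D} coincides with the first equation of~\eqref{eq:LSbfgpvar3D}, so there is nothing to prove. The third equation is $(\mu\bfp,\grad\psi)=0$ for every $\psi\in\W\subset\Hu$, which follows directly from $\bfp\in\Qo\subset\Hodivom$. The critical point is to verify the second equation for arbitrary $\bfq\in\Q$, not only for $\bfq\in\Qo$. For this I would decompose $\bfq=\bfq^0+\grad\chi$ with $\bfq^0\in\Qo$ and $\chi\in\Hu$ obtained by solving the weighted Neumann problem $\div(\mu\grad\chi)=\div(\mu\bfq)$ with boundary datum $(\mu\grad\chi)\cdot\bfn=(\mu\bfq)\cdot\bfn$ (the compatibility condition holds by integration by parts). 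Then $\curl\bfq=\curl\bfq^0$, so
\[
-(\bfu,\curl\bfq)+(\eps^{-1}\curl\bfp,\curl\bfq)
=-(\bfu,\curl\bfq^0)+(\eps^{-1}\curl\bfp,\curl\bfq^0)=0,
\]
where the last equality uses the second equation of~\eqref{eq:LSbfgpvar3D} with $\bfq^0\in\Qo$. Since $\phi=0$, the term $(\mu\bfq,\grad\phi)$ vanishes, and we are done.

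For the converse implication, let $(\bfu,\bfp,\phi)\in\V\times\Q\times\W$ solve~\eqref{eq:LSbfgpvarmul3D}. First, from the third equation $(\mu\bfp,\grad\psi)=0$ for every $\psi\in\W=\Hu\cap\Ldo$, and noting that gradients of constants vanish, I get the identity for every $\psi\in\Hu$; this is precisely the weak characterization $\bfp\in\Hodivom$. Combined with $\bfp\in\Q=\Hcurl$, this yields $\bfp\in\Qo$. Next, for any $\bfq\in\Qo$ the term $(\mu\bfq,\grad\phi)$ vanishes (since $\phi\in\W\subset\Hu$ and $\bfq\in\Hodivom$), so the second equation of~\eqref{eq:LSbfgpvarmul3D} reduces to the second equation of~\eqref{eq:LSbfgpvar3D}. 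Together with the first equation (which is identical in both formulations), this shows that $(\bfu,\bfp)$ solves~\eqref{eq:LSbfgpvar3D}. As a by-product I would also verify $\phi=0$ by testing the second equation of~\eqref{eq:LSbfgpvarmul3D} with $\bfq=\grad\phi\in\Q$ (since $\curl\grad\phi=\bfzero$): all but the last term vanish, leaving $(\mu\grad\phi,\grad\phi)=0$, hence $\grad\phi=\bfzero$ by~\eqref{eq:epsmu}, and then $\phi=0$ from the zero-mean constraint.

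The only non-routine step is the $\mu$-weighted Helmholtz decomposition used in the forward direction, but this is nothing more than solvability of a weighted Neumann Laplace problem with $\mu$ satisfying~\eqref{eq:epsmu}, which is classical. Everything else is direct algebraic manipulation of the variational equations.
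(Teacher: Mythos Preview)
Your proof is correct and follows essentially the same route as the paper. The only notable difference is that in the forward implication you spell out the $\mu$-weighted Helmholtz decomposition to pass from test functions in $\Qo$ to test functions in $\Q$, whereas the paper simply writes that ``clearly the first two equations of~\eqref{eq:LSbfgpvarmul3D} are satisfied'' (relying implicitly on the fact that the second equation involves $\bfq$ only through $\curl\bfq$); in the converse you dispose of the $(\mu\bfq,\grad\phi)$ term for $\bfq\in\Qo$ directly via the definition of $\Hodivom$, while the paper first proves $\phi=0$ and then restricts---both arguments are equivalent and equally short.
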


\begin{proof}

If $(\bfu,\bfp)$ is a solution of~\eqref{eq:LSbfgpvar3D} and $\phi=0$, then
clearly the first two equations of~\eqref{eq:LSbfgpvarmul3D} are satisfied.
Moreover, if $\bfp$ belongs to $\Qo$ then it also satisfies the third equation.

Vice versa, the third equation of~\eqref{eq:LSbfgpvarmul3D} implies that
$\bfp$ belongs to $\Qo$. Indeed, by integration by parts we formally have
\[
(\mu\bfp,\grad\psi)=-(\div(\mu\bfp),\psi)
+\langle(\mu\bfp)\cdot\bfn,\psi\rangle_{\partial\Omega}.
\]
Taking $\psi$ in $\mathscr{D}(\Omega)$ we obtain that $\mu\bfp$ is in 
$\Hdiv$ and $\div(\mu\bfp)=0$ in $\Omega$. Then, a generic $\psi\in\W$ gives
the result on the boundary that the trace of $(\mu\bfp)\cdot\bfn$ is vanishing.

It remains to show that $\phi$ is equal to zero so that the second equation
of~\eqref{eq:LSbfgpvarmul3D} corresponds to the second equation
of~\eqref{eq:LSbfgpvar3D}. Taking $\bfq=\grad\phi$ we have
$\|\mu^{1/2}\grad\phi\|_0=0$, that is $\grad\phi=0$, so that $\phi=0$ because
it is in $\Ldo$.

\end{proof}

For completeness, we give a direct proof of the existence and uniqueness of
the solution of~\eqref{eq:LSbfgpvarmul3D}.

\begin{proposition}

Let $\bfg$ be in $\Hodivom$, then there exists a unique solution
$(\bfu,\bfp,\phi)\in\V\times\Q\times\W$ of Problem~\eqref{eq:LSbfgpvarmul3D}
that satisfies $\phi=0$ and
\[
\|\bfu\|_{\curl}+\|\bfp\|_{\curl}\le C\|\bfg\|_0.
\]
\label{pr:ell3D_LM}
\end{proposition}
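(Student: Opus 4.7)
The plan is to apply the Brezzi theory for mixed variational problems to \eqref{eq:LSbfgpvarmul3D}, treating $(\bfu,\bfp)\in\V\times\Q$ as the primary variable and $\phi\in\W$ as the Lagrange multiplier. The upper-left block is the bilinear form $a$ of Proposition~\ref{pr:ell3D}, while the coupling form is $b((\bfv,\bfq),\psi):=(\mu\bfq,\grad\psi)$. The two ingredients I must check are coercivity of $a$ on the kernel of $b$ and an inf-sup condition for $b$.

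For the kernel, a pair $(\bfv,\bfq)\in\V\times\Q$ belongs to $\ker b$ precisely when $(\mu\bfq,\grad\psi)=0$ for every $\psi\in\W$. By exactly the integration by parts argument used in the proof of Proposition~\ref{pr:kernel}, this is equivalent to $\div(\mu\bfq)=0$ in $\Omega$ and $(\mu\bfq)\cdot\bfn=0$ on $\partial\Omega$, so $\ker b=\V\times\Qo$, and Proposition~\ref{pr:ell3D} then supplies the needed coercivity. For the inf-sup condition, given $\psi\in\W$ the natural test pair is $(\bfv,\bfq)=(\bfzero,\grad\psi)$. Since $\curl\grad\psi=\bfzero$, this lies in $\V\times\Q$ with norm equal to $\|\grad\psi\|_0$; one has $b((\bfzero,\grad\psi),\psi)=(\mu\grad\psi,\grad\psi)\ge\underline\mu\|\grad\psi\|_0^2$; and the Poincar\'e--Wirtinger inequality on $\Hu\cap\Ldo$ yields $\|\psi\|_1\le C\|\grad\psi\|_0$. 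Combining these gives the required lower bound on $b((\bfzero,\grad\psi),\psi)$ in terms of $\|\psi\|_1\,\|(\bfzero,\grad\psi)\|_{\V\times\Q}$.

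With both Brezzi hypotheses in place, existence, uniqueness, and the stability estimate follow from the standard theory. It remains to show $\phi=0$, which I would do exactly as in the proof of Proposition~\ref{pr:kernel}: testing the second equation of \eqref{eq:LSbfgpvarmul3D} with $\bfq=\grad\phi\in\Q$ annihilates the two $\curl$ terms and leaves $(\mu\grad\phi,\grad\phi)=0$, whence $\grad\phi=\bfzero$ and $\phi=0$ by the zero-mean constraint in $\W$. No step is genuinely delicate; the only point to monitor is that $\grad\psi\in\Q=\Hcurl$ for every $\psi\in\W$, which is immediate since its curl vanishes.
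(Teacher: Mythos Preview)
Your proposal is correct and follows essentially the same approach as the paper's own proof: recast~\eqref{eq:LSbfgpvarmul3D} as a saddle point problem, identify the kernel as $\V\times\Qo$ via the integration by parts argument from Proposition~\ref{pr:kernel}, invoke Proposition~\ref{pr:ell3D} for ellipticity in the kernel, verify the inf-sup condition by taking $\bfq=\grad\psi$, and finally obtain $\phi=0$ by testing the second equation with $\bfq=\grad\phi$. The only cosmetic difference is that you spell out the Poincar\'e--Wirtinger step a bit more explicitly than the paper does.
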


\begin{proof}

By extending to $\V\times\Q$ the bilinear form $a$ defined in the statement of
Proposition~\ref{pr:ell3D}, we can rewrite Problem~\eqref{eq:LSbfgpvarmul3D}
as: find $(\bfu,\bfp,\phi)\in\V\times\Q\times\W$
such that
\[
\left\{
\aligned
&a(\bfu,\bfp;\bfv,\bfq)+(\mu\bfq,\grad\phi)=(\bfg,\curl\bfv)&&\forall(\bfv,\bfq)\in\V\times\Q\\
&(\mu\bfp,\grad\psi)=0&&\forall\psi\in\W.
\endaligned
\right.
\]
This is a typical saddle point problem for which we show the \emph{ellipticity
in the kernel} and the \emph{inf-sup condition}.

The kernel is defined as
\[
K=\{(\bfv,\bfq)\in\V\times\Q:(\mu\bfq,\grad\psi)=0\ \forall\psi\in\W\},
\]
that is, $K=\V\times\Qo$ (see the proof of Proposition~\ref{pr:kernel}).
Hence, the ellipticity in the kernel follows from the ellipticity result
proved in Proposition~\ref{pr:ell3D}.

The inf-sup condition
\[
\inf_{\psi\in\W}\sup_{\bfq\in\Q}\frac{(\mu\bfq,\grad\psi)}{\|\bfq\|_{\curl}\|\psi\|_1}\ge\beta_0
\]
follows in a standard way by observing that, given $\psi\in\W$,
$\bfq=\grad\psi$ belongs to $\Q$ and
\[
\frac{(\mu\bfq,\grad\psi)}{\|\bfq\|_{\curl}}\ge\beta_0\|\psi\|_1,
\]
where $\beta_0$ depends on the Poincar\'e constant $C_P$.

Hence, the existence and stability follows from the standard results of mixed
problems (see~\cite{bbf}, for instance) and $\phi$ is zero as observed in the
proof of Proposition~\ref{pr:kernel}.

\end{proof}

Let us introduce a general finite element discretization of
Problem~\eqref{eq:LSbfgpvarmul3D} by considering finite dimensional subspaces
$\V_h\subset\V$, $\Q_h\subset\Q$ and $\W_h\subset\W$. Then the discrete
counterpart of~\eqref{eq:LSbfgpvarmul3D} reads: given $\bfg\in\Hodivom$, find
$(\bfu_h,\bfp_h,\phi_h)\in\V_h\times\Q_h\times\W_h$ such that
\begin{equation}
\left\{
\aligned
&(\eps\bfu_h,\bfv)+(\mu^{-1}\curl\bfu_h,\curl\bfv)-(\bfv,\curl\bfp_h)=
(\bfg,\curl\bfv)&&\forall\bfv\in\V_h\\
&-(\bfu_h,\curl\bfq)+(\eps^{-1}\curl\bfp_h,\curl\bfq)+(\mu\bfq,\grad\phi_h)=0
&&\forall\bfq\in\Q_h\\
&(\mu\bfp_h,\grad\psi)=0&&\forall\psi\in\W_h.
\endaligned
\right.
\label{eq:LSbfgpvarmul3Dh}
\end{equation}
The analysis of existence and uniqueness of the solution of this problem can be
performed following the same lines of the proof of
Proposition~\ref{pr:ell3D_LM}. The crucial point is to show the ellipticity of
the bilinear form $a$ on the discrete kernel. This is not true for any choice
of discrete spaces, therefore we consider N\'ed\'elec edge elements for both
$\V_h$ and $\Q_h$ and standard Lagrange nodal elements for $\W_h$. For
$K\in\T_h$ and $k\ge0$, let $\mathcal{N}_k(K)$ be the following space:
\[
\mathcal{N}_k(K)=[\mathcal{P}_k(K)]^3+\mathcal{P}_k(K)[(x,y,z)]^\top,
\]
then we introduce the following discrete spaces
\begin{equation}
\aligned
&\V_h=\{\bfv\in\V: \bfv|_K\in\mathcal{N}_k(K)\ \forall K\in\T_h\}\\  
&\Q_h=\{\bfq\in\Q: \bfq|_K\in\mathcal{N}_k(K)\ \forall K\in\T_h\}\\
&\W_h=\{\psi\in\W: \psi|_K\in\mathcal{P}_k(K)\ \forall K\in\T_h\}.
\endaligned
\label{eq:spaces3D}
\end{equation}
With this choice for the finite element spaces we can show the following
proposition:
\begin{proposition}

Let $\bfg$ be in $\Hodivom$, then there exists a unique solution
$(\bfu_h,\bfp_h,\phi_h)\in\V_h\times\Q_h\times\W_h$ of
Problem~\eqref{eq:LSbfgpvarmul3Dh} that satisfies $\phi_h=0$ and
\[
\|\bfu_h\|_{\curl}+\|\bfp_h\|_{\curl}\le C\|\bfg\|_0.
\]
Moreover the following error estimates holds true:
\[
\|\bfu-\bfu_h\|_{\curl}+\|\bfp-\bfp_h\|_{\curl}\le 
C\inf_{(\bfv,\bfq)\in\V_h\times\Q_h}
\left(\|\bfu-\bfv\|_{\curl}+\|\bfp-\bfq\|_{\curl}\right).
\]
\label{pr:apriori3D}
\end{proposition}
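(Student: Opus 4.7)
The plan is to mirror at the discrete level the Brezzi-style argument used in the proof of Proposition~\ref{pr:ell3D_LM}. Concretely, I would recast \eqref{eq:LSbfgpvarmul3Dh} as a saddle-point problem with bilinear form $a$ (extended to $\V\times\Q$) on the principal variables and the constraint $(\mu\bfq,\grad\psi)$ on the multiplier, then verify a discrete inf-sup condition and a discrete ellipticity on the kernel, uniformly in $h$. The standard abstract theory of mixed finite elements (e.g.~\cite{bbf}) will then yield existence, uniqueness, the stability bound, the vanishing of $\phi_h$, and the Céa-type error estimate.

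The discrete inf-sup condition is the easy ingredient: the choice of spaces in~\eqref{eq:spaces3D} satisfies the de~Rham compatibility $\grad\W_h\subset\Q_h$, so for any $\psi\in\W_h$ the test function $\bfq=\grad\psi$ together with the Poincaré inequality gives
\[
\sup_{\bfq\in\Q_h}\frac{(\mu\bfq,\grad\psi)}{\|\bfq\|_{\curl}}
\ge\frac{(\mu\grad\psi,\grad\psi)}{\|\grad\psi\|_{\curl}}
\ge\beta_0\|\psi\|_1,
\]
with $\beta_0$ independent of $h$.

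The main obstacle is the discrete ellipticity on the kernel
\[
K_h=\{(\bfv,\bfq)\in\V_h\times\Q_h:(\mu\bfq,\grad\psi)=0\ \forall\psi\in\W_h\}.
\]
Indeed, elements of $K_h$ are only \emph{discretely} divergence-free, so in general $\bfq\not\in\Qo$ and the continuous Friedrichs inequality used in Proposition~\ref{pr:ell3D} does not apply. What one needs instead is the uniform \emph{discrete Friedrichs inequality}
\[
\|\bfq_h\|_0\le C_F\|\curl\bfq_h\|_0
\qquad\forall\bfq_h\in\Q_h\text{ with }(\mu\bfq_h,\grad\psi)=0\ \forall\psi\in\W_h,
\]
with $C_F$ independent of $h$. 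This is a classical estimate for the N\'ed\'elec--Lagrange pair, proved via the commuting-diagram property of the canonical interpolants combined with a discrete compactness argument, which I would invoke from the edge-element literature. Granted this estimate, the computation in the proof of Proposition~\ref{pr:ell3D} goes through verbatim on $K_h$ and yields an ellipticity constant $\alpha>0$ independent of $h$.

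With these two ingredients in hand, Brezzi's theorem delivers existence, uniqueness, and the stated stability bound. The identity $\phi_h=0$ follows exactly as in the continuous case: the second equation of~\eqref{eq:LSbfgpvarmul3Dh} admits $\bfq=\grad\phi_h\in\Q_h$ as a test function, giving $\|\mu^{1/2}\grad\phi_h\|_0=0$ and hence $\phi_h=0$ since $\phi_h\in\Ldo$. Since both $\phi$ and $\phi_h$ vanish, the abstract Céa estimate for mixed problems collapses to the stated best-approximation bound for $(\bfu,\bfp)$, with constant depending only on $\alpha$ and $\beta_0$, both uniform in $h$.
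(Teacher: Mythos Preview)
Your proposal is correct and follows essentially the same route as the paper: recast~\eqref{eq:LSbfgpvarmul3Dh} as a saddle-point problem, verify the discrete inf-sup via $\grad\W_h\subset\Q_h$, obtain ellipticity on the discrete kernel from a uniform discrete Friedrichs inequality for the N\'ed\'elec--Lagrange pair (the paper cites~\cite[Prop.~4.6]{abdg} for this), show $\phi_h=0$ by testing with $\bfq=\grad\phi_h$, and conclude via the standard mixed theory. The only cosmetic difference is that the paper invokes the discrete Friedrichs inequality by reference rather than sketching its proof via commuting diagrams and discrete compactness.
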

\begin{proof}
Using the bilinear form $a$ defined in Proposition~\ref{pr:ell3D} we can
write Problem~\eqref{eq:LSbfgpvarmul3Dh} as a saddle point problem as follows:
find $(\bfu_h,\bfp_h,\phi_h)\in\V_h\times\Q_h\times\W_h$ such that
\begin{equation}
\left\{
\aligned
&a(\bfu_h,\bfp_h;\bfv,\bfq)+(\mu\bfq,\grad\phi_h)=(\bfg,\curl\bfv)
&&\forall(\bfv,\bfq)\in\V_h\times\Q_h\\
&(\mu\bfp_h,\grad\psi)=0&&\forall\psi\in\W_h.
\endaligned
\right.
\label{eq:mixedh}
\end{equation}
The discrete kernel is given by
\[
\mathbb{K}_h=\{(\bfv_h,\bfq_h)\in\V_h\times\Q_h: (\mu\bfq_h,\grad\psi)=0\
\forall\psi\in\W_h\}.
\]
Then the discrete ellipticity in the kernel holds true thanks to the fact that
for functions $\bfq_h\in\Q_h$ such that $(\mu\bfq_h,\grad\psi)=0$
$\forall\psi\in\W_h$ the following Friedrichs inequality holds true
(see~\cite[Prop.~4.6]{abdg}) for a suitable constant $\tilde{C}_F$ not
depending on $h$
\[
\|\bfq_h\|_0\le \tilde{C}_F\|\curl\bfq_h\|_0.
\]
Recalling that $\grad\W_h\subset\Q_h$, we have also that the discrete infsup
condition
\[
\inf_{\psi\in\W_h}\sup_{\bfq\in\Q_h}
\frac{(\mu\bfq,\grad\psi)}{\|\bfq\|_{\curl}\|\psi\|_1}\ge \beta_1
\]
with $\beta_1>0$ depending on the Poincar\'e constant but not on $h$.
Indeed, given $\psi\in\W_h$ it is enough to take $\bfq=\grad\psi\in\V_h$.
Moreover, we observe that taking $(\bfv,\bfq)=(\boldsymbol{0},\grad\phi_h)$
in the first equation in~\eqref{eq:mixedh} we get
$(\mu\grad\phi_h,\grad\phi_h=0$ which implies that $\phi_h=0$ since it has zero
mean value.

Hence by the theory on the approximation of saddle point problems, (see, e.g.,
\cite{bbf}), we obtain existence, uniqueness and stability of the solution
of~\eqref{eq:mixedh} together with the error estimate.

\end{proof}

We can then write the eigenvalue problems associated
with~\eqref{eq:LSbfgpvar3D} and~\eqref{eq:LSbfgpvarmul3Dh} and the
corresponding continuous and discrete solution operators.

We recall that $\V=\Hocurl$, $\Q=\Hcurl$, and $\W=\Hu\cap\Ldo$. The continuous
eigenvalue problem reads: find $\lambda\in\RE$ such that for a non-vanishing
$\bfp\in\Q$ there exists $(\bfu,\phi)\in\V\times\W$
such that
\begin{equation}
\left\{
\aligned
&(\eps\bfu,\bfv)+(\mu^{-1}\curl\bfu,\curl\bfv)-(\bfv,\curl\bfp)=\lambda
(\bfp,\curl\bfv)&&\forall\bfv\in\V\\
&-(\bfu,\curl\bfq)+(\eps^{-1}\curl\bfp,\curl\bfq)+(\mu\bfq,\grad\phi)=0&&\forall\bfq\in\Q\\
&(\mu\bfp,\grad\psi)=0&&\forall\psi\in\W.
\endaligned
\right.
\label{eq:LSbfgpvarmul3Deig}
\end{equation}

The corresponding solution operator $T:\Q\to\Q$ is defined as $T\bfg=\bfp$
where $\bfp$ is the second component of the solution
to~\eqref{eq:LSbfgpvarmul3D}.

With the finite element spaces defined in~\eqref{eq:spaces3D}, the discrete
version of~\eqref{eq:LSbfgpvarmul3Deig} reads: find $\lambda_h\in\RE$ such that for a non-vanishing
$\bfp_h\in\Q_h$ there exists $(\bfu_h,\phi_h)\in\V_h\times\W_h$
such that
\begin{equation}
\left\{
\aligned
&(\eps\bfu_h,\bfv)+(\mu^{-1}\curl\bfu_h,\curl\bfv)-(\bfv,\curl\bfp_h)=\lambda_h
(\bfp_h,\curl\bfv)&&\forall\bfv\in\V_h\\
&-(\bfu_h,\curl\bfq)+(\eps^{-1}\curl\bfp_h,\curl\bfq)+(\mu\bfq,\grad\phi_h)=0&&\forall\bfq\in\Q_h\\
&(\mu\bfp_h,\grad\psi)=0&&\forall\psi\in\W_h
\endaligned
\right.
\label{eq:LSbfgpvarmul3Deigh}
\end{equation}
and the corresponding solution operator $T_h:\Q\to\Q_h\subset\Q$ is defined as
$T_h\bfg=\bfp_h$, where $\bfp_h$ is the second component of the solution
to~\eqref{eq:LSbfgpvarmul3Dh}.

From the a priori estimates proved in Proposition~\ref{pr:apriori3D}, we then
obtain the convergence of the discrete eigenmodes to the continuous one along
the lines of the classical Babu\v ska--Osborn theory.

\begin{theorem}
For $k\ge0$, let $\V_h$, $\Q_h$, and $W_h$ as defined in~\eqref{eq:spaces3D};
assume that $\lambda$ is an eigenvalue of~\eqref{eq:LSbfgpvarmul3Deig} of
multiplicity $m$ with associated eigenspace $E$. Then there exist exactly $m$
eigenvalues $\lambda_{1,h}\le\cdots\le\lambda_{m,h}$
of~\eqref{eq:LSbfgpvarmul3Deigh} converging to $\lambda$. Moreover, let us
denote by $E_h$ the space spanned by eigenfunctions associated with the
$m$ discrete eigenvalues. Then
\[
\aligned
&|\lambda-\lambda_{i,h}|\le C\epsilon(h)^2&&(i=1,\dots,m)\\
&\hat\delta(E,E_h)\le C\epsilon(h),
\endaligned
\]
where
\[
\epsilon(h)=\sup_{\substack{\bfp\in E\\\|\bfp\|_1=1}}\|(T-T_h)\bfp\|_{\Q}
\]
and $\hat\delta(A,B)$ denotes the gap between the subspaces $A$ and $B$ of $\Q$.

\end{theorem}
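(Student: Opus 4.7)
The plan is to recast the convergence of eigenmodes as a consequence of the classical Babu\v{s}ka--Osborn spectral approximation theorem applied to the solution operator $T:\Q\to\Q$, following the same strategy used in the two-dimensional case. The abstract framework requires three properties: (i) $T$ is compact; (ii) $T$ is self-adjoint with respect to the $\Q$-inner product; (iii) $\|T-T_h\|_{\mathcal{L}(\Q,\Q)}\to 0$ as $h\to 0$. Once these are secured, the conclusions follow verbatim from~\cite{bbg} and the operator-level discussion of Section~\ref{se:operatorform}.

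First I would verify self-adjointness by repeating the calculation of Proposition~\ref{pr:self-adjoint} with scalar $\curl/\rot$ replaced by their vector counterparts. The Lagrange multiplier term $(\mu\bfq,\grad\phi)$ appearing in~\eqref{eq:LSbfgpvarmul3D} drops out in the cross-testing step because Proposition~\ref{pr:kernel} forces $\phi=0$ on both the source and test sides, so the chain of equalities in the 2D proof transfers without modification.

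Next I would establish compactness of $T$ together with the quantitative bound
\[
\|(T-T_h)\bfg\|_{\curl}\le C h^t\,\|\bfg\|_0
\]
for some $t>0$. Proposition~\ref{pr:apriori3D} reduces this to a best-approximation problem in $\V_h\times\Q_h$. Compactness and the rate then follow from the well-known embedding of $\Hcurl\cap\Hodivom$ into $\boldsymbol{H}^s(\Omega)$ for some $s>0$ on a Lipschitz polyhedron (see~\cite{amrouche}) applied both to $\bfp=T\bfg\in\Qo$ and to the primal component $\bfu$, combined with the standard N\'ed\'elec interpolation estimates on $\V_h,\Q_h$ and the Lagrange interpolation estimate on $\W_h$. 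The resulting bound gives, in particular, $\|T-T_h\|_{\mathcal{L}(\Q,\Q)}\le C\epsilon(h)\to 0$, which by~\cite{bbg} is both necessary and sufficient for spectral convergence of a symmetric problem.

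With these ingredients in hand, Babu\v{s}ka--Osborn supplies exactly $m$ discrete eigenvalues $\lambda_{1,h}\le\dots\le\lambda_{m,h}$ clustering at $\lambda$ and the gap estimate $\hat\delta(E,E_h)\le C\epsilon(h)$. The doubled rate $|\lambda-\lambda_{i,h}|\le C\epsilon(h)^2$ is a consequence of self-adjointness and can be read off from the symmetric Schur complement reformulation of Section~\ref{se:operatorform}. The main obstacle in this plan is the regularity step: on general Lipschitz polyhedra the embedding index $s$ may be small and must be carefully imported, and this embedding is the only genuinely three-dimensional ingredient that is not already present in the 2D analysis; it is also what ultimately confines the rigorous theory in 3D to the edge-element choice~\eqref{eq:spaces3D}, since for nodal discretizations the analogous bound on $\Q_h$-functions satisfying the discrete gauge condition is not available in general.
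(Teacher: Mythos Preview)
Your proposal is correct and follows essentially the same route as the paper, which does not give an explicit proof but simply states that the result follows ``from the a priori estimates proved in Proposition~\ref{pr:apriori3D} \ldots\ along the lines of the classical Babu\v{s}ka--Osborn theory.'' Your outline fills in the details---self-adjointness via the 3D analogue of Proposition~\ref{pr:self-adjoint}, compactness and uniform operator convergence via the regularity embedding of~\cite{amrouche} combined with the best-approximation bound of Proposition~\ref{pr:apriori3D}---and this is exactly the intended argument.
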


\section{Numerical examples in three dimensions}
\label{se:3Dres}

We conclude this paper with some three dimensional numerical results.
We limit ourselves to the results on the cube $(0,\pi)^3$ where the exact
results are known analytically.

Table~\ref{tb:edgecube} shows the results of the computation performed with
formulation~\eqref{eq:LSbfgpvarmul3Deigh} and confirms the good behavior of
the scheme.

\begin{table}
\caption{ Edge elements on a uniform mesh of the cube }
\label{tb:edgecube}
\begin{tabular}{r|rrr}
\hline
Exact&\multicolumn{3}{c}{Computed (rate)}\\[3pt]
\hline
2.00000 & 2.12293 & 2.03055 (2.01) & 2.00762 (2.00) \\
2.00000 & 2.12749 & 2.03140 (2.02) & 2.00782 (2.01) \\
2.00000 & 2.12749 & 2.03140 (2.02) & 2.00782 (2.01) \\
3.00000 & 3.25478 & 3.06315 (2.01) & 3.01576 (2.00) \\
3.00000 & 3.25478 & 3.06315 (2.01) & 3.01576 (2.00) \\
5.00000 & 5.50030 & 5.12147 (2.04) & 5.03016 (2.01) \\
5.00000 & 5.50930 & 5.12395 (2.04) & 5.03079 (2.01) \\
5.00000 & 5.50930 & 5.12395 (2.04) & 5.03079 (2.01) \\
5.00000 & 5.71701 & 5.17116 (2.07) & 5.04231 (2.02) \\
5.00000 & 5.71701 & 5.17116 (2.07) & 5.04231 (2.02) \\
\hline
Mesh & 1/8 & 1/16 & 1/32 \\
\hline
\end{tabular}
\end{table}

Finally, we show some promising results related to the nodal approximation of
our problem. Indeed, we are considering the following two field formulation:
find $\lambda_h\in\RE$ such that for a non-vanishing $\bfp_h\in\Q_h$ there
exists $\bfu_h\in\V_h$ such that
\[
\left\{
\aligned
&(\eps\bfu_h,\bfv)+(\mu^{-1}\curl\bfu_h,\curl\bfv)-(\bfv,\curl\bfp_h)=\lambda_h
(\bfp_h,\curl\bfv)&&\forall\bfv\in\V_h\\
&-(\bfu_h,\curl\bfq)+(\eps^{-1}\curl\bfp_h,\curl\bfq)=0&&\forall\bfq\in\Q_h,
\endaligned
\right.
\]
where the spaces $\V_h$ and $\Q_h$ are both consisting of continiuous
piecewise linear elements in each component. No theory is available in this
case and in general the obtained results can be wrong; in particular no gauge
condition is imposed on $\bfp$ so that there is no uniqueness in the case of
the source problem. Nevertheless, in the case of the cube we obtain the nice
and optimal results reported in Table~\ref{tb:nodalcube} in the case of a
uniform mesh and in Table~\ref{tb:nodalcubenu} when a nonuniform mesh is
considered.

\begin{table}
\caption{ Nodal elements on a uniform mesh of the cube }
\label{tb:nodalcube}
\begin{tabular}{r|rrr}
\hline
Exact&\multicolumn{3}{c}{Computed (rate)}\\[3pt]
\hline
2.00000 & 2.12841 & 2.03172 (2.02) & 2.00792 (2.00) \\
2.00000 & 2.12841 & 2.03172 (2.02) & 2.00792 (2.00) \\
2.00000 & 2.13005 & 2.03210 (2.02) & 2.00801 (2.00) \\
3.00000 & 3.27704 & 3.06924 (2.00) & 3.01734 (2.00) \\
3.00000 & 3.27704 & 3.06924 (2.00) & 3.01734 (2.00) \\
5.00000 & 5.56075 & 5.13631 (2.04) & 5.03393 (2.01) \\
5.00000 & 5.56516 & 5.13703 (2.04) & 5.03411 (2.01) \\
5.00000 & 5.56516 & 5.13703 (2.04) & 5.03411 (2.01) \\
5.00000 & 5.85266 & 5.20337 (2.07) & 5.05035 (2.01) \\
5.00000 & 5.86162 & 5.20337 (2.08) & 5.05035 (2.01) \\
\hline
Mesh & 1/8 & 1/16 & 1/32 \\
\hline
\end{tabular}
\end{table}

\begin{table}
\caption{ Nodal elements on a nonuniform mesh of the cube }
\label{tb:nodalcubenu}
\begin{tabular}{r|rrr}
\hline
Exact&\multicolumn{3}{c}{Computed (rate)}\\[3pt]
\hline
2.00000 & 2.39482 & 2.10329 (1.93) & 2.02523 (2.03) \\
2.00000 & 2.41541 & 2.10486 (1.99) & 2.02538 (2.05) \\
2.00000 & 2.42126 & 2.10563 (2.00) & 2.02546 (2.05) \\
3.00000 & 3.99333 & 3.22212 (2.16) & 3.05435 (2.03) \\
3.00000 & 4.00258 & 3.22486 (2.16) & 3.05464 (2.04) \\
5.00000 & 7.58994 & 5.61356 (2.08) & 5.14744 (2.06) \\
5.00000 & 7.64984 & 5.62135 (2.09) & 5.14764 (2.07) \\
5.00000 & 7.75720 & 5.62405 (2.14) & 5.14818 (2.07) \\
5.00000 & 7.80063 & 5.63059 (2.15) & 5.14861 (2.09) \\
5.00000 & 7.86660 & 5.63186 (2.18) & 5.14964 (2.08) \\
\hline
Mesh & 1/8 & 1/16 & 1/32 \\
\hline
\end{tabular}
\end{table}

\section*{Acknowledgments}
The author Lucia Gastaldi is member of INdAM Research group GNCS and she is
partially supported by PRIN/MIUR and by IMATI/CNR


\end{document}